\documentclass[12pt]{article}
\usepackage[margin=1in]{geometry}
\usepackage{amsmath}
\usepackage{mathtools}
\usepackage{amsthm,bm}
\usepackage{bbm}
\usepackage{textcomp}
\usepackage{graphicx}
\usepackage{eurosym}
\usepackage{amssymb}
\usepackage{subcaption}
\usepackage{comment}
\usepackage{cancel}
\usepackage{float}
\linespread{1.05}

\usepackage[numbers]{natbib}
\bibliographystyle{abbrvnat}

\newcommand\citeayn[2][]{\citeauthor*{#2} (\citeyear{#2}) \cite[#1]{#2}}

\usepackage[unicode,breaklinks]{hyperref}
\usepackage[nameinlink]{cleveref}
\usepackage{xcolor}

\definecolor{red}{rgb}{0.7,0.15,0.15}
\definecolor{green}{rgb}{0,0.5,0}
\definecolor{blue}{rgb}{0,0,0.7}
\hypersetup{colorlinks, linkcolor={blue},citecolor={green}, urlcolor={red}}
\definecolor{bleudefrance}{rgb}{0.19, 0.55, 0.91}

\newtheorem{lemma}{Lemma}[section]
\newtheorem{assumption}{Assumption}[section]
\newtheorem{proposition}{Proposition}[section]

\newtheorem{corollary}{Corollary}[section]

\usepackage{tikz}
\usepackage{pgfplots}
\usepackage{enumerate}
\pgfplotsset{compat=1.17}

\def \R{\mathbb{R}}
\def\erm{\mathrm{e}}
\def\drm{\mathrm{d}}
\def\Nu{K}

\title{A Mean Field Game for Capacity Expansion Modeling}
\author{Emma Hubert\thanks{Department of Operations Research \& Financial Engineering, Princeton University.} \thanks{Research partially supported by the NSF grant DMS-2307736.} \and Dimitrios Lolas\footnotemark[1] \and Ronnie Sircar\footnotemark[1]}
\date{\today}

\begin{document}
\maketitle

\begin{abstract}
This paper studies the optimal investment behavior of renewable electricity producers in a competitive market, where both prices and installation costs are influenced by aggregate industry activity. We model the resulting crowding effects using a mean field game framework, capturing the strategic interactions among a continuum of heterogeneous producers. The equilibrium dynamics are characterized via a coupled system of Hamilton--Jacobi--Bellman and Fokker--Planck equations, which describe the value function of a representative producer and the evolution of the distribution of installed capacities over time. We analyze both deterministic and stochastic versions of the model, providing analytical insights in tractable cases and developing numerical methods to approximate the general solution. Simulation results illustrate how aggregate investment responds to changing market conditions, cost structures, and exogenous productivity shocks. 

\bigskip
\noindent
\textit{Keywords.} Mean field games, optimal control, renewable investments. 

\medskip
\noindent
\textit{AMS 2020 subject classifications.} 91A16; 49L12; 91B74.

\end{abstract}

\section{Introduction}

As nations accelerate their transition towards low-carbon economies, the expansion of renewable energy sources, particularly solar and wind, has become a critical priority. However, this shift also brings substantial new challenges for electricity systems. 
A striking illustration occurred in April 2025, when much of Spain and parts of Portugal experienced a massive power outage affecting millions of households (see, for instance \citeayn{bajo-buenestado2025iberian}). Although investigations are ongoing at time of writing, the incident highlights how increasing reliance on weather-dependent generation can amplify vulnerabilities to unexpected disturbances. More broadly, the growing prevalence of negative electricity prices in countries with deep renewables penetration further exemplifies the economic and operational stresses induced by this transition. 

These phenomena underline the urgent need for careful planning of capacity expansion, not only to meet decarbonization targets but also to ensure system resilience. A variety of complementary solutions have been explored, such as demand-side response programs, investments in grid-scale storage, reinforcement of transmission networks, and dynamic pricing mechanisms.
Yet, long-term investment decisions by competing firms remain central to the evolution of electricity markets, shaping both the pace and the stability of renewables integration. Capacity expansion planning must therefore balance the twin goals of promoting rapid deployment and preserving economic and operational robustness. In this context, mathematical modeling—including dynamic games, mean-field approximations, and stochastic control approaches—plays a crucial role in understanding strategic interactions between firms and guiding the design of efficient policy interventions (see for example \citeayn{cacciarelli2025impact}).

When modeling investment decisions in general, and in particular in energy systems, a common approach is to formulate dynamic control or stopping problems, often under uncertainty. In such frameworks, a central planner or a representative agent optimizes a long-term objective, accounting for evolving technological, market, and regulatory conditions (see, for example, \citeayn{aid2009structural}, \citeayn{carmona2010valuation}, \citeayn{ludkovski2016technology}). However, when considering renewable energy installations, such as solar panels which can be deployed relatively easily and at varying scales by a multitude of actors, it becomes essential to explicitly model the decentralized and competitive nature of investment. The price of electricity, which depends on the aggregate level of installed capacity, becomes a key channel of interaction between these agents. Directly analyzing multi-player dynamic games, however, quickly leads to computational intractability as the number of participants grows. To overcome this difficulty, it is common to study the limiting regime where the number of players tends to infinity, leading to more tractable mean field approximations that capture the aggregate effect of individual behaviors.

Mean field games (MFGs) 
study the limiting behaviour of stochastic differential games as the number of (exchangeable) players tends to infinity (see, for instance, \citeayn{gueant2011mean} for an early review article). 
In this framework, each agent optimizes their objective function, taking as given the aggregate behavior of the population, which is itself determined in equilibrium as a fixed point of the collective dynamics of the individual strategies. The MFG formulation has since been widely adopted to analyze decentralized decision-making in large populations, providing both analytical and numerical tractability, and insight into emergent macro-level behavior. 

In recent years, MFGs have been increasingly applied to a wide range of problems in energy systems. These include renewable investment decisions by \citeayn{aid2020entry}, \citeayn{alasseur2023mean}, \citeayn{escribe2024mean} or \citeayn{aid2025regulation}, electricity storage optimization by \citeayn{alasseur2020extended}, and the design of demand-response programs by \citeayn{elie2021meanfield}. MFGs have also been used to study energy mix transitions by \citeayn{chan2017fracking} and carbon emissions regulation by \citeayn{carmona2022mean}, \citeayn{hernandezsantibanez2023pollution}, \citeayn{bichuch2024stackelberg}, \citeayn{dayanikli2024multipopulation}, as well as market design and electricity price formation by \citeayn{firoozi2022principal} or \citeayn{bassiere2024meanfield}. More recently, the framework has been extended to analyze broader aspects of the energy transition, by \citeayn{dumitrescu2024energy}, and energy expenditure from cryptocurrency mining by \citeayn{li2024mean}. This list is far from exhaustive, but illustrates the growing role of MFGs in understanding the complex dynamics of modern energy systems. 

Among the aforementioned papers, we motivate our work from \cite{alasseur2023mean}, which develops a deterministic mean-field-type model for capacity expansion in solar energy with an infinite time horizon. In this model, identical producers optimize solar production capacity, and interact through the electricity price, which depends of the total installed capacity. This framework captures the trade-off between investment rewards, costs and the price-depressing effects of overcapacity. This paper characterizes the steady-state equilibrium using Pontryagin’s maximum principle, and provides a study of the effects of subsidies for solar production from a central planner's point of view.

The model we propose can be seen as a finite-horizon reformulation of \cite{alasseur2023mean}, with two key extensions. First, we allow for heterogeneity in initial capacities across producers. This extension is particularly relevant in the context of solar energy, where small-scale residential investors may compete with large-scale solar farms. Second, we include interaction not only through the state (installed capacity) but also through the control (installation rate), introducing a crowding effect. Indeed, while producers naturally interact through the market price of electricity, which depends on total installed capacity, installation costs may also be influenced by the aggregate speed of investment across the market, for instance, due to supply-chain congestion or labor constraints. 

We employ an approximation where aggregates (or sums) are replaced by a continuum mean multiplied by the number of agents in the sum. This has been used in models of cryptocurrency mining in \cite{li2024mean} and \citeayn{garcia_crypto}, and its error is analyzed in \citeayn{cags}.  It allows finite agent games with interaction through aggregates to benefit from mean field game technology, in particular reduction of dimension. In the current model, 
the aforementioned features lead naturally to a mean field game of both state and control, allowing us to study dynamic investment behavior, threshold effects, and the evolution of aggregate capacity over time. 

Finally, unlike \cite{alasseur2023mean}, we consider a finite time horizon, which better reflects the long but bounded planning horizons of investors in the energy sector. For example, investors may anticipate changes in technology or policy over the next two or three decades, such as the emergence of alternative energy sources or regulatory shifts, and may thus optimize over a finite investment window. Nevertheless, the approach we develop here could be naturally reduced to the infinite-horizon setting.

Our analysis is based on characterizing the mean field game equilibrium via a system of coupled Hamilton--Jacobi--Bellman (HJB) and Fokker--Planck (FP) equations. In an homogeneous setting, where all producers start with the same initial capacity, our model reduces to a deterministic control problem similar to \cite{alasseur2023mean}, but with a finite-time horizon, for which we prove existence and uniqueness of solutions to a forward-backward ODE system and identify threshold behavior in optimal investment. In the heterogeneous case, we study the full mean field game of state and control, establish the structure of the equilibrium using partial differential equation (PDE) methods, and analyze the resulting distribution of installed capacities. In the case of a linear price function, we propose a numerical method based on a quadratic ansatz in the non-installation region and a finite difference scheme in the installation region. We also extend the model to include idiosyncratic randomness in the capacity dynamics, showing that our framework remains tractable under stochastic perturbations.

The remainder of the paper is organized as follows. In \Cref{sec:model}, we introduce our model and its main features. In \Cref{section_hom}, we analyze the homogeneous agents case and derive explicit characterizations of the equilibrium. In \Cref{sec:heterogeneous}, we turn to the heterogeneous setting, study the associated HJB--FP system, and present our numerical scheme. Finally, in \Cref{sec:randomness}, we show how our framework extends to include randomness in capacity dynamics. We conclude in \Cref{sec:conclusion} with a discussion of extensions and policy implications.

\section{Model formulation}\label{sec:model}

The formulation of our model echoes that of \cite{alasseur2023mean}, but is adapted to a finite time horizon \( T > 0 \) (in years) and designed to account for potential heterogeneity in the initial capacities of investors. 

We first consider the finite-player model with \( N+1 \) renewable producers, indexed by \( i \in \{0, \dots, N\} \). Each producer is characterized by their installed capacity \( x^{(i)}_t \) in {MW} at time \( t \in [0, T] \). In the absence of new investment, installed capacity is assumed to depreciate exponentially at rate \( \delta > 0 \), reflecting a gradual loss in production efficiency over time. A producer \( i \in \{0, \dots, N\} \) can invest in new capacity by choosing an installation rate \( \nu^{(i)}_t \geq 0 \) at each time $t \in [0,T]$, which represents the rate in {MW/year} at which new capacity is added. The dynamics of installed capacity for producer \( i \) then follow:
\begin{align}\label{eq:prod_dynamics}
    \drm x^{(i)}_t = \big(-\delta x^{(i)}_t + \nu^{(i)}_t\big)\, \drm t, \quad \text{for } t \in [0, T],
\end{align}
starting from a fixed initial installed capacity \( x^{(i)}_0 \geq 0 \), and where \( \delta > 0 \) is the depreciation rate as above. The total (or aggregate) installed capacity is denoted by
\begin{align}\label{eq:sum_prod}
    X_t \coloneq \sum_{j=0}^{N} x^{(j)}_t, \quad t\in[0,T].
\end{align}
Note that since the installation rate is assumed to be non-negative, we necessarily have
\begin{align}\label{eq:lower_bound}
    X_t \geq \erm^{-\delta t} \sum_{j=0}^{N} x^{(j)}_0, \quad t\in[0,T].
\end{align}

The aggregate installed capacity impacts the price at which all producers sell electricity. More precisely, we assume that the common market price at time $t$ is given by \( P(\lambda_tX_t) \) (in \$/MWh), for a (strictly) decreasing price function \( P: \R_+ \longrightarrow \mathbb{R} \), reflecting the idea that higher total capacity depresses the market price of electricity due to an abundance of cheap renewable production. The parameter $\lambda_t$ represents the fraction of installed capacity that is effectively producing electricity at time $t$, and thus accounts for the intermittency of renewable energy sources, such as variability in wind or sunlight. For simplicity, we assume $\lambda_t\equiv \lambda\in(0,1]$ is constant over time and we absorb this constant to $P$. 

Following \cite{alasseur2023mean}, the marginal cost of production is constant, denoted by $c > 0$ (in \$/MWh), and represents operational expenses associated with running the installed capacity. This may include maintenance, insurance and other ongoing management costs of operating a renewable energy asset such as a solar or wind installation. We also model the cost of installing new capacity through two distinct mechanisms that curb rapid expansion:
\begin{enumerate}[$(i)$]\itemsep -2pt
    \item $\alpha > 0$ (in \$/MW) is the marginal installation cost;
    \item $\beta > 0$ (in (\$ year)/MW$^2$) is a crowding sensitivity parameter, capturing the additional cost per MW of capacity that arises from increased installation activity in the market. This reflects market frictions such as limited contractor availability, supply chain bottlenecks, or permitting delays that become more pronounced when many producers are simultaneously expanding.
\end{enumerate}
However, contrary to \cite{alasseur2023mean}, we assume that the installation cost is not constant, but instead increases with the total installation rate due to crowding effects. More precisely, the instantaneous cost of installing new capacity $\nu_t^{(i)}$ is defined as follows:
\begin{align}\label{eq:sum_rate}
  \nu_t^{(i)} \big( \alpha  + \beta \Nu_t \big), \quad \text{with } \; \Nu_t \coloneq \sum_{j =0}^N \nu_t^{(j)}, \quad t \in [0,T].
\end{align}

Finally, given a fixed initial installed capacity \( x^{(i)}_0 \geq 0 \), the individual producer's objective is defined by the following maximization problem
\begin{equation}\label{eq:cost_general}
J_i \big(x^{(i)}_0 \big) := \sup_{\nu^{(i)}\geq0}\,\int_0^{T}  \erm^{-rt} \Big(  \big( P(X_t) - c \big) h x_{t}^{(i)} - \alpha \nu_t^{(i)} - \beta 
\nu^{(i)}_t \Nu_t
\Big) \drm t,    
\end{equation}
where $h \in [0,\,8760]$ is the number of production hours per year, and $r>0$ is a discount rate.

As is well known, this type of multi-agent non-zero sum differential game is difficult to solve, even for the two-player case. For analytical and computational tractability, we will instead study a continuum  approximation by a type of mean field game, where a continuum mean multiplied by the number of players proxies for the aggregate quantities $X$ and $\Nu$, respectively defined in \eqref{eq:sum_prod}, \eqref{eq:sum_rate}. We first consider in the next section the simpler case of homogeneous producers; the study of the general mean field game is postponed to \Cref{sec:heterogeneous}.

\section{Homogeneous producers}
\label{section_hom}

In this section, we assume that all producers start with the same initial installed capacity, $x_0^{(i)} \eqcolon x_0 \geq 0$ for all $i \in \{0, \dots, N\}$. This simplification, which makes our model a finite-horizon counterpart of \cite{alasseur2023mean} and thus facilitates comparison, will be relaxed in the next section.

In this setting, producers are indistinguishable, and their optimal actions can be derived using Pontryagin's maximum principle, as in \cite{alasseur2023mean}.

More precisely, we first focus on the $i$th producer's optimization problem, $i \in \{0,\dots,N\}$, for fixed capacity and effort of other producers. The Hamiltonian corresponding to the objective function \eqref{eq:cost_general} is given by
\begin{align*}
    H^{(i)}_t \big(x_{t}^{(i)}, u_{t}^{(i)}, \nu_{t}^{(i)} \big) \coloneq u_{t}^{(i)} \big( - \delta x_{t}^{(i)} + \nu_{t}^{(i)} \big) + \big( P(X_t) - c \big) h x_{t}^{(i)} - \alpha \nu_t^{(i)} - \beta 
\nu^{(i)}_t \Nu_t, \quad t \in [0,T],
\end{align*}
where ${u}^{(i)}$ is the adjoint variable, or costate. Recalling that $K$ defined in \eqref{eq:sum_rate} depends on $\nu^{(i)}$, the maximization of the previous Hamiltonian leads to the following installation rate
\begin{align*}
\nu_t^{(i)}= \dfrac{1}{2\beta} \bigg(u_{t}^{(i)}-\alpha-\beta\sum_{j\neq i}\nu_{t}^{(j)}\bigg)^+, \quad t \in [0,T].
\end{align*}
Under the assumption of homogeneous producers, all adjoint variables ${u}^{(i)}$, $i \in \{0, \dots, N\}$, actually coincide, as well as the corresponding optimal investment rates, which can thus be derived by solving the previous equation, leading to
\begin{equation*}
\nu_{t}^{(i)}= \dfrac{1}{\beta(N+2)} \big(u_t^{(i)}-\alpha \big)^+, \quad t\in [0,T].
\end{equation*}
Furthermore, the costate ${u}^{(i)}$ satisfies the following ordinary differential equation (ODE):
\begin{align*}
    \dot{u}_{t}^{(i)} = (r + \delta) u_{t}^{(i)} - (P(X_t)-c)\,h-h  x_{t}^{(i)}\frac{\partial P(X_t)}{\partial x_{t}^{(i)}}, \quad t \in [0,T], \quad u_{T}^{(i)}=0.
\end{align*}
We will subsequently drop the dependence on the index $i$, since all producers are identical.

As the number of producers is assumed to be large, we follow here the standard mean field game convention by assuming that the impact of an individual producer to the total capacity is negligible, \textit{i.e.}
$$\frac{\partial P(X_t)}{\partial x_{t}^{(i)}} \approx 0.$$
Replacing in the above ODE for the costate, we derive the following simplified ODE,
\begin{align*}
\dot{u}_{t}=(r+\delta)u_{t}-(P(X_t)-c)h,\quad t \in [0,T], \quad u_{T}^{}=0.
\end{align*}
Still under the assumption that $N$ is large, the natural approximation $\frac{N+1}{N+2}\approx1$ leads to the following formula for the total installation rate defined in \eqref{eq:sum_rate},
\begin{align*}
\Nu_t \coloneq \sum_{j =0}^N \nu_t^{(j)} = \dfrac{1}{\beta} \big(u_{t}^{}-\alpha\big)^+,
\end{align*}
highlighting in particular that installation of new capacity occurs at time $t \in [0,T]$ if and only if $u_t> \alpha$. 
Finally, summing \eqref{eq:prod_dynamics} over $i \in \{0, \dots, N\}$ and using the previous equation, we derive the following ODE for the total installed capacity defined in \eqref{eq:sum_prod},
\begin{align*}
\dot{X}_t+\delta X_t=\frac{1}{\beta}(u_t-\alpha)^+, \quad t \in [0,T], \quad X_0=(N+1)x_0.
\end{align*}

In summary, when $N$ is sufficiently large, the original multi-player non-zero-sum game with homogeneous producers can be asymptotically approximated by a mean field game, characterized by the following forward-backward system:
\begin{subequations}\label{eq:FB_system}
\begin{align}
\dot{X}_t &= - \delta X_t + \frac{1}{\beta}(u_t-\alpha)^+, \quad \quad \quad \; \, t \in [0,T], \quad X_0=(N+1)x,
\label{eq:FB_X}\\
\dot{u}_{t} &=(r+\delta)u_{t}-\left(P(X_t)-c\right)h, \quad t \in [0,T], \quad u_{T}^{}=0.\label{eq:FB_u}
\end{align}
\end{subequations} 
The previous system is the finite-horizon analogue of the one derived in \cite{alasseur2023mean}. In the next section, we first prove existence and uniqueness for the system \eqref{eq:FB_X}--\eqref{eq:FB_u} under some regularity assumptions for the price function $P$. We then highlight some interesting properties of the solution and the associated optimal installation rate. These theoretical results offer preliminary insights into the investment problem, which are further developed and illustrated by the numerical simulations presented at the end of this section.

\subsection{Theoretical results}

As mentioned above, the first main result establishes existence and uniqueness of the forward-backward system \eqref{eq:FB_X}--\eqref{eq:FB_u}.

\begin{proposition}\label{prop:existence_uniqueness}
    Assuming that the price function $P$ is Lipschitz continuous, then there exists a unique solution to the system \eqref{eq:FB_X}--\eqref{eq:FB_u}. 
\end{proposition}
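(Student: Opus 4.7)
The plan is to recast the forward-backward system \eqref{eq:FB_X}--\eqref{eq:FB_u} as a fixed-point problem on $C([0,T])$ and apply Banach's fixed-point theorem. Since \eqref{eq:FB_u} is linear in $u$ for a prescribed trajectory $X$, it can be solved in closed form,
\[
u^X_t = h\int_t^T \erm^{-(r+\delta)(s-t)}\bigl(P(X_s) - c\bigr)\, \drm s,
\]
and injecting $u^X$ into \eqref{eq:FB_X} then defines an operator $\Phi: C([0,T]) \to C([0,T])$ by
\[
\Phi(X)_t := \erm^{-\delta t}(N+1)x + \frac{1}{\beta}\int_0^t \erm^{-\delta(t-s)}\bigl(u^X_s - \alpha\bigr)^+ \drm s,
\]
whose fixed points are exactly the solutions of the original system.

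The key contractivity estimate combines the $L_P$-Lipschitz continuity of $P$, the $1$-Lipschitz property of $x \mapsto x^+$, and the fact that the two exponentials in $\Phi$ are bounded by $1$, to give
\[
|\Phi(X)_t - \Phi(Y)_t| \le \frac{hL_P}{\beta}\int_0^t \!\int_s^T |X_\tau - Y_\tau|\, \drm\tau\, \drm s,
\]
and hence a contraction in $\|\cdot\|_\infty$ with constant $hL_PT^2/(2\beta)$ whenever $T$ is sufficiently small. For arbitrary $T$, I would equip $C([0,T])$ with a Bielecki-type weighted norm $\|X\|_\mu := \sup_{t\in[0,T]} \erm^{-\mu t}|X_t|$ (or work on the product space $C([0,T];\R^2)$ with oppositely weighted components for $X$ and $u$) and use the decay factors $\erm^{-\delta(t-s)}$ and $\erm^{-(r+\delta)(s-t)}$ inside $\Phi$ to tune $\mu$ so that the contraction constant falls below one irrespective of $T$.

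The main obstacle is precisely this last step: because one equation is forward in $t$ and the other backward, one cannot patch short-interval solutions end-to-end as for a standard initial-value problem, so the global contraction has to be obtained in a single stroke via a careful choice of norm, or else through a method-of-continuation argument that deforms \eqref{eq:FB_X}--\eqref{eq:FB_u} from a trivially solvable reference system (\textit{e.g.}, $P$ constant, where $u$ decouples from $X$) to the full nonlinear problem. Once the unique fixed point $X$ is secured, $u = u^X$ is automatically determined and inherits the required $C^1$ regularity, so that the pair $(X,u)$ solves the coupled system uniquely.
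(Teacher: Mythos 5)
Your reduction to a fixed point of the map $\Phi$ is fine, and the small-$T$ contraction estimate is correct. The gap is the passage to arbitrary $T$, which you yourself flag as the main obstacle but do not resolve — and the remedy you lean on (a Bielecki weight $\erm^{-\mu t}$, or oppositely weighted components) provably does not work here. Because $u^X_s$ integrates $X$ over $[s,T]$ while $\Phi$ integrates $u^X$ over $[0,t]$, the value $\Phi(X)_t$ depends on the \emph{entire} trajectory of $X$ on $[0,T]$; whichever sign of $\mu$ you choose, one of the two integrals runs against the weight and contributes a factor of order $\erm^{|\mu|T}$, so the contraction constant behaves like $\erm^{|\mu|T}/\mu^2$ and cannot be driven below one by tuning $\mu$. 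Without any weight the constant is $hL_P/(\beta\delta(r+\delta))$, which is a genuine smallness condition on the coupling rather than a consequence of the hypotheses (with the paper's parameters it is of order $10^4$). This is the standard obstruction for forward-backward systems: unlike initial-value problems, short-interval solutions cannot be concatenated, and global solvability is simply false for general Lipschitz coupling — it holds here only because of a structural property your argument never uses, namely that $P$ is decreasing.

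The paper's proof exploits exactly that monotonicity, via a shooting argument: it first solves the \emph{forward} initial-value problem \eqref{eq:F_system} from an arbitrary $(X_0,u_0)$ (Picard--Lindel\"of, which is where Lipschitz continuity of $P$ enters), proves a strict comparison principle in $u_0$ using that $P$ is decreasing (\Cref{lem:ODE_forward}), then shows the map $u_0 \longmapsto u_T$ is continuous (a Gronwall estimate) and takes both signs, so Bolzano's theorem gives a root $u_T=0$; strict monotonicity of the same map gives uniqueness. To repair your argument you would either have to import this monotonicity (e.g.\ turning your fixed-point map into a monotone operator, or carrying out the continuation method with a priori bounds derived from $P$ decreasing), or accept a restrictive smallness condition on $hL_P/\beta$ or on $T$ that the proposition does not impose.
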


To prove the previous result, we first study the forward system analogous to \eqref{eq:FB_system}. More precisely, we consider the following system of ODEs,
\begin{align}\label{eq:F_system}
\dot{X}_t &= - \delta X_t + \frac{1}{\beta}(u_t-\alpha)^+, \quad \dot{u}_{t} =(r+\delta)u_{t}-\left(P(X_t)-c\right)h, \quad t \in [0,T],
\end{align}
with initial condition $(X_0,u_0) \in \R_+ \times \R$.

\begin{lemma}\label{lem:ODE_forward}
    If $P$ is Lipschitz continuous, then there exists a unique solution $(X,u)$ to the initial value problem \eqref{eq:F_system}. Moreover, a comparison result holds, in the sense that if $(X,u)$ and $(\tilde X, \tilde{u})$ are solutions to \eqref{eq:F_system}, starting from $(X_0,u_0)$ and $(X_0, \tilde{u}_0)$ respectively with $u_0 < \tilde{u}_0$, then $X_t \leq \tilde X_t$ and $u_t < \tilde u_t$ for all $t \in [0,T]$.
\end{lemma}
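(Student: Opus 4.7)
The proof decomposes \Cref{lem:ODE_forward} into two parts: global existence/uniqueness, and the comparison statement.

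For existence and uniqueness, I would observe that the right-hand side of \eqref{eq:F_system}, viewed as a map $\R_+ \times \R \to \R^2$, is globally Lipschitz continuous in $(X,u)$ uniformly in $t$. Indeed, $u \mapsto (u-\alpha)^+$ is $1$-Lipschitz, the linear terms $-\delta X$ and $(r+\delta)u$ are trivially Lipschitz, and $X \mapsto P(X)$ is Lipschitz by hypothesis. A direct application of the Cauchy--Lipschitz (Picard--Lindel\"of) theorem then yields a unique $C^1$ solution to \eqref{eq:F_system} for any initial condition $(X_0, u_0) \in \R_+ \times \R$; the at-most-linear growth of the vector field in $(X,u)$ precludes finite-time blow-up, so the solution extends to all of $[0,T]$.

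For the comparison property, let $(X,u)$ and $(\tilde X, \tilde u)$ be the two solutions, and set $\Delta X_t := \tilde X_t - X_t$, $\Delta u_t := \tilde u_t - u_t$, so that $\Delta X_0 = 0$ and $\Delta u_0 > 0$. Subtracting the equations gives
\begin{align*}
\dot{\Delta X}_t &= -\delta\, \Delta X_t + \frac{1}{\beta}\bigl[(\tilde u_t - \alpha)^+ - (u_t - \alpha)^+ \bigr],\\
\dot{\Delta u}_t &= (r+\delta)\Delta u_t - h\bigl[P(\tilde X_t) - P(X_t)\bigr].
\end{align*}
Two monotonicity facts drive the argument: the positive-part map $u \mapsto (u-\alpha)^+$ is non-decreasing, and $P$ is strictly decreasing, as assumed in \Cref{sec:model}. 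I would then carry out a continuity/bootstrap argument. Set $t^* := \sup\{ t \in [0,T] : \Delta u_s > 0 \text{ for all } s \in [0,t]\}$, which is strictly positive by continuity of $\Delta u$ and the sign of $\Delta u_0$. On $[0,t^*)$, monotonicity of $(\cdot - \alpha)^+$ yields $\dot{\Delta X}_t + \delta \Delta X_t \geq 0$, which together with $\Delta X_0 = 0$ (via Gronwall) gives $\Delta X_t \geq 0$. The decreasing property of $P$ then ensures $P(\tilde X_t) - P(X_t) \leq 0$, whence $\dot{\Delta u}_t \geq (r+\delta)\Delta u_t$ and therefore $\Delta u_t \geq \erm^{(r+\delta)t} \Delta u_0 > 0$ on $[0, t^*)$.

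Letting $t \uparrow t^*$ gives $\Delta u_{t^*} \geq \erm^{(r+\delta)t^*}\Delta u_0 > 0$, which by openness of $\{\Delta u > 0\}$ and maximality of $t^*$ forces $t^* = T$. Consequently $\Delta u_t > 0$ and $\Delta X_t \geq 0$ on all of $[0,T]$, as claimed. The main (mild) subtlety is the coupled nature of the inequalities: establishing $\Delta u > 0$ uses $\Delta X \geq 0$ and vice versa. The bootstrap via $t^*$ is precisely the device that breaks this circularity, exploiting the strict initial gap $\Delta u_0 > 0$ together with the fact that $\Delta u$ can only change sign after a continuous passage through zero.
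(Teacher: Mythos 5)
Your proposal is correct and follows essentially the same route as the paper: Picard--Lindel\"of for global existence and uniqueness, and a continuity argument for the comparison that rests on the same two monotonicity facts (the non-decreasing map $u \longmapsto (u-\alpha)^+$ and the decreasing price $P$). The only cosmetic difference is that you run the comparison as a bootstrap up to the last time of strict inequality using differential inequalities and Gronwall, whereas the paper argues by contradiction at the first time $u_t=\tilde u_t$ using the explicit Duhamel representations of $X$ and $u$; the logical skeleton is identical.
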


\begin{proof}[Proof of \Cref{lem:ODE_forward}]
Letting $\bm{x}_t:=(X_t,u_t)$, we can rewrite the system \eqref{eq:F_system} of two ODEs as $\dot{\bm{x}}_t=f(\bm{x}_t)$, starting from $\bm{x}_0:=(X_0,u_0)$.
Since the price function $P$ is assumed to be Lipschitz continuous, the generator $f$ is also Lipschitz, so existence and uniqueness holds on $[0,T]$ by the Picard–Lindelöf theorem for this initial value problem. 
Note that, in particular, the solution satisfies for $t \in [0,T]$,
\begin{subequations}\label{eq:sol_explicit}
\begin{align}
X_t &=X_0 \erm^{-\delta t}+\frac{1}{\beta}\int_{0}^{t} \erm^{-\delta(t-s)} (u_s-\alpha)^+ \drm s, \label{eq:X_sol}\\ 
u_t &=u_0 \erm^{(r+\delta)t}-h\int_{0}^{t} \erm^{(r+\delta)(t-s)} \big( P(X_s)-c\big)  \drm s. \label{eq:u_sol}
\end{align}
\end{subequations} 

Let then $(X,u)$ and $(\tilde X, \tilde{u})$ be solutions to \eqref{eq:F_system}, starting from $(X_0,u_0)$ and $(X_0, \tilde{u}_0)$ respectively, with $u_0<\tilde{u}_0$. To establish the comparison result, we proceed by contradiction: let $t\in[0,T]$ be the smallest time for which $u_t=\tilde{u}_t$, meaning that $u_s \leq \tilde{u}_s$ for $s \in [0,t]$. From \eqref{eq:X_sol}, we have $X_s \leq \tilde X_s$, and using further that $P$ is decreasing, we deduce $P(X_s) \geq P(\tilde X_s)$, for all $s \in [0,t]$. Using this in \eqref{eq:u_sol}, we obtain
\begin{align*}
u_t 
&\leq u_0 \erm^{(r+\delta)t}-h\int_{0}^{t} \erm^{(r+\delta)(t-s)} \big( P(\tilde X_s)-c \big) \drm s\\
&<\tilde{u}_0 \erm^{(r+\delta)s}-h\int_{0}^{t} \erm^{(r+\delta)(t-s)} \big(P(\tilde X_s)-c \big) \drm s
=\tilde{u}_t,
\end{align*}
which leads to a contradiction, since we assumed $u_t=\tilde{u}_t$. In other words, for all $t \in [0,T]$ we have $u_t<\tilde{u}_t$, implying by \eqref{eq:X_sol} that $X_t \leq \tilde X_t$.
\end{proof}

By the previous lemma, it suffices to show existence and uniqueness of $u_0 \in \R$ such that the solution $(X,u)$ to \eqref{eq:F_system} starting from $(X_0,u_0)$ satisfies $u_T = 0$ to prove \Cref{prop:existence_uniqueness}.

\begin{proof}[Proof of \Cref{prop:existence_uniqueness}]
Let $(X,u)$ and $(\tilde X, \tilde{u})$ be solutions to \eqref{eq:F_system}, starting from $(X_0,u_0)$ and $(X_0, \tilde{u}_0)$ respectively, and assume without loss of generality that $u_0>\tilde{u}_0$. Define then $\Delta(t) \coloneq u_t - \tilde{u}_t$ which is positive by \Cref{lem:ODE_forward} for all $t\in[0,T]$. Recall that we also have $X_t \geq \tilde X_t$, implying $P(X_t) \leq  P( \tilde X_t)$, $t\in[0,T]$. Starting from \eqref{eq:u_sol}, and using in addition that $P$ is $L$-Lipschitz continuous for $L \geq 0$, we obtain,
\begin{align*}
\Delta(T)&= \big( u_0 - \tilde u_0 \big) \erm^{(r+\delta)T} 
+ h\int_{0}^{T} \erm^{(r+\delta)(T-t)} \big(P( \tilde X_t) - P(X_t) \big)  \drm t \\
&\leq \Delta(0) \erm^{(r+\delta)T} 
+ h L \int_{0}^{T} \erm^{(r+\delta)(T-t)} \big(X_t - \tilde X_t \big)  \drm t.
\end{align*} 
Using now \eqref{eq:X_sol}, we have
\begin{align*}
    X_t - \tilde X_t &= \frac{1}{\beta}\int_{0}^{t} \erm^{-\delta(t-s)} \big( (u_s-\alpha)^+ - (\tilde u_s-\alpha)^+ \big) \drm s,
\end{align*}
and substituting into the previous inequality, we deduce
\begin{align*}
\Delta(T)
&\leq \Delta(0) \erm^{(r+\delta)T} 
+ \frac{hL}{\beta} \int_{0}^{T} \int_{0}^{t} \erm^{(r+\delta)(T-t)} \erm^{-\delta(t-s)} \big( (u_s-\alpha)^+ - (\tilde u_s-\alpha)^+ \big) \drm s \drm t \\
&\leq \Delta(0) \erm^{(r+\delta)T}+\frac{hL}{\beta} \int_{0}^{T}\int_{0}^{t} \erm^{(r+\delta)(T-t)} \erm^{-\delta(t-s)} \big(u_s-\tilde{u}_s\big)  \drm s \, \drm t \\
&\leq \Delta(0) \erm^{(r+\delta)T}+\frac{hL M}{\beta} \int_{0}^{T} \Delta(t)\, \drm t,
\end{align*} 
where $M$ is a constant depending on $\delta, r$ and $T$.
Then, by Gronwall's Lemma, we deduce
\begin{align*}
    \Delta(T) \coloneq u_T-\tilde{u}_T \leq (u_0-\tilde{u}_{0}) \erm^{\big(r+\delta+\frac{h L M}{2\beta} \big)T}.
\end{align*}
As the case $u_0<\tilde{u}_0$ can be treated similarly, we conclude that the map $u_0 \in \R \longmapsto u_T \in \R$ is Lipchitz continuous. 

In addition, we can show that this continuous map is negative for $u_0$ sufficiently small, and positive for $u_0$ sufficiently large. Indeed, on the one hand, if
\begin{align*}
    u_0 < h \min_{t\in[0,T]} \int_{0}^{t} \erm^{-(r+\delta)s} \big(P \big(X_0\erm^{-\delta s} \big)-c \big)  \drm s, 
\end{align*}
which is non-positive, so in particular smaller than $\alpha$, then the forward system \eqref{eq:F_system} admits the following solution 
\begin{equation*}
X_t=X_0 \erm^{-\delta t}, \quad u_t=u_0 \erm^{(r+\delta)t}-h\int_{0}^{t} \erm^{-(r+\delta)(s-t)} \big( P(X_s)-c \big) \drm s, \quad t \in [0,T].
\end{equation*}
By the condition on $u_0$ above, we have $u_t <0$ for all $t \in [0,T]$, so in particular $u_T<0$ as wanted. On the other hand, recall that $X_t \geq X_0 \erm^{-\delta t}$ for all $t \in [0,T]$ by \eqref{eq:lower_bound}, implying since $P$ is decreasing that $P(X_t)\leq P(X_0 \erm^{-\delta t})$, which is a positive constant. We then have
\begin{equation*}
u_T\geq u_0 \erm^{(r+\delta)T}-h\int_{0}^{T} \big( P\big(X_0 \erm^{-\delta t}\big)-c \big) \erm^{(r+\delta)(T-t)} \drm t,
\end{equation*}
and thus taking
\begin{align*}
    u_0 > h \int_{0}^{T} \big( P\big(X_0 \erm^{-\delta t}\big)-c \big) \erm^{-(r+\delta)t} \drm t,
\end{align*}
we obtain $u_T>0$. 

From the previous reasoning, we deduce that the continuous map $u_0 \in \R \longmapsto u_T \in \R$ has values of opposite sign on $\R$, and thus has a root by Bolzano's theorem. In other words, there exists $u_0 \in \R$ such that $u_T=0$, which concludes the existence part. Uniqueness follows directly from \Cref{lem:ODE_forward}. Indeed, by the comparison principle, one necessarily needs $u_0 = \tilde u_0$ to ensure $u_T = \tilde u_T = 0$.
\end{proof}

To ensure existence and uniqueness of the solution to the forward-backward system \eqref{eq:FB_X}--\eqref{eq:FB_u}, we now work under the assumption that the strictly decreasing price function $P$ is Lipschitz continuous.
In addition to this main result, we can establish some properties of the total installed capacity over time. We begin by constructing a trivial solution under a specific condition on the model parameters.

\begin{lemma}\label{lemma_trivial}
If the parameters $(r,\delta,T,h,\alpha,c,X_0)$ and the price function $P$ are such that
\begin{align*}
    h\int_{t}^{T} \erm^{-(r+\delta)(s-t)}(P(X_0 \erm^{-\delta s})-c)\, \drm s\leq\alpha, \quad \text{ for all } \; t \in [0,T],
\end{align*}
then the unique solution to the system \eqref{eq:FB_X}--\eqref{eq:FB_u} is given by
\begin{equation}\label{eq:trivial_sol}
(X_t,u_t)=\left(X_0 \erm^{-\delta t},h\int_{t}^{T} \erm^{-(r+\delta)(s-t)}(P(X_0 \erm^{-\delta s})-c)\, \drm s\right), \quad t \in [0,T],
\end{equation}
with corresponding optimal installation rate $\Nu \equiv 0$. 
\end{lemma}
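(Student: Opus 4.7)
The plan is to exhibit the pair in \eqref{eq:trivial_sol} as a solution of the forward-backward system \eqref{eq:FB_X}--\eqref{eq:FB_u} under the stated hypothesis, and then invoke the uniqueness part of \Cref{prop:existence_uniqueness} to conclude. Concretely, I would first guess the trivial investment regime $\Nu \equiv 0$, in which case \eqref{eq:FB_X} degenerates to $\dot X_t = -\delta X_t$, so that $X_t = X_0 \erm^{-\delta t}$ with the prescribed initial condition.

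Next, I would substitute this expression for $X$ into the backward ODE \eqref{eq:FB_u} and solve it as a standard linear ODE with terminal condition $u_T = 0$. Variation of constants gives
\begin{align*}
    u_t = h\int_{t}^{T} \erm^{-(r+\delta)(s-t)}\big(P(X_0\erm^{-\delta s})-c\big)\, \drm s, \quad t \in [0,T],
\end{align*}
which matches the proposed candidate. One then has to verify that the candidate is internally consistent, i.e.\ that the positive-part nonlinearity in \eqref{eq:FB_X} indeed vanishes. This amounts to checking that $u_t \leq \alpha$ for all $t\in[0,T]$, which is precisely the standing hypothesis of the lemma. Hence \eqref{eq:trivial_sol} genuinely satisfies both equations of the forward-backward system, with $(u_t-\alpha)^+ = 0$, and thus the associated total installation rate is $\Nu_t = \frac{1}{\beta}(u_t-\alpha)^+ \equiv 0$, as claimed.

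Finally, uniqueness of the solution is already secured by \Cref{prop:existence_uniqueness}, so the proof is complete once existence of the candidate is verified. There is essentially no hard step here: the only subtlety is to notice that the hypothesis has been calibrated exactly so that the representation obtained by integrating the backward ODE against the trivial forward trajectory lies in the regime $u_t \leq \alpha$, where the ReLU term disappears and the forward-backward coupling decouples. Without this calibrated bound, one could not rule out that $u$ crosses $\alpha$ at some intermediate time and triggers a nonzero installation rate, which would invalidate the ansatz $X_t = X_0 \erm^{-\delta t}$.
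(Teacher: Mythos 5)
Your proposal is correct and follows essentially the same route as the paper: verify that the candidate pair satisfies the system because the hypothesis forces $u_t \leq \alpha$, hence $(u_t-\alpha)^+ = 0$ and $\Nu \equiv 0$, then appeal to the uniqueness already established in \Cref{prop:existence_uniqueness} under the standing Lipschitz assumption on $P$. Your write-up is in fact slightly more explicit than the paper's, which simply asserts that the verification is straightforward.
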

\begin{proof}
Consider the candidate solution \eqref{eq:trivial_sol} provided in the lemma. By assumption, one has $u_t \leq \alpha$ for all $t \in [0,T]$, implying that 
\begin{equation*}
\Nu_t = \frac{1}{\beta} \big( u_t-\alpha\big)^+=0, \quad t \in [0,T].
\end{equation*}
It is then straightforward to verify that the candidate solution \eqref{eq:trivial_sol} satisfies \eqref{eq:FB_X}--\eqref{eq:FB_u}.
\end{proof}
The previous result highlights that if $T$ or $h$ are sufficiently small, or conversely if $r$, $c$, $\alpha$ or $X_0$ are sufficiently large, then there will be no new capacity installed at any time. As such model will be less interesting to study, we are led to make the following assumption, which will be verified in the numerical experimentation in \Cref{numerics}, in particular for the parameters chosen in \eqref{eq:param_numerics}.

\begin{assumption}\label{model_assump}
The model parameters $(r,\delta,T,h,\alpha,c,X_0)$ and the price function $P$ are such that $(P(X_0)-c)h>(r+\delta)\alpha$ and there exists a time $t_0\in[0,T]$ for which 
\begin{equation*}
h\int_{t_0}^{T} \erm^{-(r+\delta)(s-t_0)}\big( P\big(X_0 \erm^{-\delta s}\big)-c\big) \drm s>\alpha.
\end{equation*}
\end{assumption}

Under the previous assumption, the unique solution $(X,u)$ to the system \eqref{eq:FB_X}–\eqref{eq:FB_u} is not trivial, and exhibits some interesting properties, as stated in the following lemmas. 

\begin{lemma}\label{T^* lemma}
Under \Cref{model_assump}, there exists ${T^\star } \in (0,T)$ such that $u_t\geq \alpha$ for $t\leq {T^\star }
$ and $u_t\leq \alpha$ for $t\geq {T^\star }$.
\end{lemma}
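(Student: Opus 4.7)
The plan is to introduce the critical capacity $\bar X \coloneq P^{-1}(c + (r+\delta)\alpha/h)$, which is well-defined because $P$ is strictly decreasing, and to note that the first part of Assumption~\ref{model_assump} gives $X_0 < \bar X$. The pivotal observation, obtained from the adjoint ODE \eqref{eq:FB_u}, is that at any crossing time $\tau \in [0,T]$ with $u_\tau = \alpha$ one has $\dot u_\tau = (r+\delta)\alpha - h(P(X_\tau) - c)$, which has the same sign as $X_\tau - \bar X$. Consequently, a descending crossing of $u$ through $\alpha$ forces $X_\tau \leq \bar X$, whereas an ascending crossing forces $X_\tau \geq \bar X$.

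I would then set $T^\star \coloneq \inf\{t \in [0,T] : u_t < \alpha\}$, which is well-defined and strictly less than $T$ since $u_T = 0 < \alpha$ and $u$ is continuous; continuity also yields $u_{T^\star} = \alpha$. To establish $T^\star > 0$, the plan is to argue by contradiction. If $T^\star = 0$, then either $u \leq \alpha$ throughout $[0,T]$ and the trivial solution of \Cref{lemma_trivial} applies, directly contradicting the second part of Assumption~\ref{model_assump}; or there is a first time $\tau_1 > 0$ at which $u$ exceeds $\alpha$, but since no installation has occurred on $[0,\tau_1]$ one has $X_{\tau_1} = X_0 \erm^{-\delta \tau_1} < X_0 < \bar X$, contradicting the ascending-crossing requirement $X_{\tau_1} \geq \bar X$. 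The borderline case $u_0 = \alpha$ is ruled out by noting that Assumption~\ref{model_assump} yields $\dot u_0 < 0$, reducing it to one of the two cases above.

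Given $T^\star \in (0,T)$, the inequality $u_t \geq \alpha$ on $[0, T^\star]$ is immediate from the definition of $T^\star$ as an infimum together with continuity. The main part is to show $u_t \leq \alpha$ on $[T^\star, T]$, which I would again prove by contradiction: if $u$ were to rise above $\alpha$ after $T^\star$, let $\tau_2 \coloneq \inf\{t > T^\star : u_t > \alpha\}$; on $(T^\star, \tau_2)$ no installation occurs, so $X$ decays strictly and $X_{\tau_2} = X_{T^\star}\, \erm^{-\delta(\tau_2 - T^\star)} < X_{T^\star}$. Since $T^\star$ is a descending crossing, $X_{T^\star} \leq \bar X$, whereas $\tau_2$ is an ascending crossing, giving $X_{\tau_2} \geq \bar X$. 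Chaining these inequalities yields $\bar X \leq X_{\tau_2} < X_{T^\star} \leq \bar X$, a contradiction.

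The main obstacle is this last chain of inequalities, which is the heart of the argument. It rests on two pillars: the strict decay of $X$ on any no-installation interval (owing to $\delta > 0$), and the correspondence between the direction of a crossing of $u$ through $\alpha$ and the side of $\bar X$ on which the capacity sits. Together they preclude oscillations of $u$ about the threshold $\alpha$ and force a single switching time.
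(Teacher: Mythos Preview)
Your argument is correct and follows essentially the same route as the paper's: both hinge on evaluating $\dot u$ at a crossing time $u_\tau=\alpha$ and relating its sign to the position of $X_\tau$ relative to the threshold capacity (which you name $\bar X$ explicitly, while the paper leaves it implicit), then using the strict decay of $X$ on no-installation intervals to rule out a second crossing. Your explicit introduction of $\bar X$ and your definition of $T^\star$ as an infimum streamline the exposition somewhat, but the underlying mechanism is identical to the paper's.
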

\begin{proof}
First, if $u_t \leq \alpha$ for all $t \in [0,T]$, then the corresponding optimal installation rate is $K\equiv 0$, leading to the solution derived in \eqref{eq:trivial_sol}. In particular, we have
\begin{align*}
    u_t = h\int_{t}^{T} \erm^{-(r+\delta)(s-t)}(P(X_0 \erm^{-\delta s})-c)\, \drm s \leq \alpha, \quad t \in [0,T],
\end{align*}
which contradicts \Cref{model_assump}. Thus, there exists at least one $t \in[0,T]$ such that $u_t > \alpha$. 

We then prove by contradiction that $u_0 > \alpha$. In particular, we assume that $u_0\leq \alpha$ and take the first $t \in [0,T]$ for which $u_t=\alpha$, which exists by the above reasoning. In this case, we have $\dot{u}_t \geq0$, and $u_s \leq \alpha$ for $s \in [0,t]$. This gives $K_s = 0$ for all $s \in [0,t]$, implying $X_t=X_0 \erm^{-\delta t} > X_0$.
Using in addition that the price function $P$ is strictly decreasing, we derive from \eqref{eq:FB_u} the following inequality
\begin{equation*}
0\leq \dot{u}_t = (r+\delta)u_t -\left(P(X_t)-c\right)h \leq (r+\delta) \alpha-\left(P(X_0)-c\right).
\end{equation*}
However, by \Cref{model_assump}, the right hand side is negative, leading to a contradiction. 

To summarize, we necessarily have $u_0 > \alpha$ and $u_T = 0$. Then by continuity, there exists a time $T^\star \in [0,T]$ such that $u_{T^\star} = \alpha$. It remains to show that $u_t \geq \alpha$ on $t \in [0,T^\star]$ and $u_t \leq \alpha$ on $t \in [T^\star,T]$. We also proceed by contradiction, assuming that there exists an interval $[t_1, t_2] \subset [0,T]$ such that $u_{t_1}=u_{t_2}=\alpha$ and $u_s < \alpha$ for $s \in (t_1,t_2)$. On this interval, the installation rate is zero and the capacity decays at rate $\delta$. Mathematically, $X_{t_1}>X_{t_2}$, and thus $P(X_{t_1})\leq P(X_{t_2})$. Using again \eqref{eq:FB_u}, we obtain
\begin{equation*}
\dot{u}_{t_1}=(r+\delta)u_{t_1}-(P(X_{t_1})-c)h \geq (r+\delta)u_{t_2}-(P(X_{t_2})-c) h=\dot{u}_{t_2},
\end{equation*}
and since $\dot{u}_{t_1}\leq 0 \leq \dot{u}_{t_2}$, we necessarily have $\dot{u}_{t_1}= \dot{u}_{t_2}=0$, which contradicts the fact that $u_s < \alpha$ for $s \in (t_1,t_2)$. Therefore, such interval cannot exist. In other words, if $u$ drops strictly below $\alpha$ at some point, it cannot cross back to $\alpha$ afterwards. Since $u_0 > \alpha$ and $u_T = 0$, this concludes the proof of existence of ${T^\star }$ as required in the lemma.
\end{proof}

We can now prove that the price will stay above the production cost. 
\begin{lemma}\label{lem:price_cost}
Under \Cref{model_assump}, we have $P(X_t)\geq c$ for all $t \in [0,T]$.
\end{lemma}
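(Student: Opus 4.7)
The plan is to prove the stronger claim that $X_t < X^\star$ for all $t \in [0,T]$, where $X^\star$ is the unique level satisfying $P(X^\star) = c$ (existence and uniqueness of $X^\star$ follow from the continuity and strict monotonicity of $P$, together with $P(X_0) > c$ from \Cref{model_assump}). By the strict monotonicity of $P$, this immediately yields $P(X_t) > c$. To this end, I fix $t^\star \in [0,T]$ at which the continuous map $t \mapsto X_t$ attains its maximum on $[0,T]$, and establish $X_{t^\star} < X^\star$ by case analysis on the location of $t^\star$.

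I would first rule out $t^\star \in [T^\star, T]$. Indeed, \Cref{T^* lemma} gives $u_t \leq \alpha$ on this interval, so the installation rate in \eqref{eq:FB_X} vanishes and $X$ satisfies $\dot X_t = -\delta X_t$, hence is strictly decreasing on $[T^\star, T]$. Moreover, $\dot X$ is continuous on $[0,T]$ (since $(u-\alpha)^+$ is continuous), with $\dot X_{T^\star} = -\delta X_{T^\star} < 0$ using $X_{T^\star} \geq X_0 \erm^{-\delta T^\star} > 0$ from \eqref{eq:lower_bound}. Thus $X$ is also decreasing on a left-neighborhood of $T^\star$, which excludes $t^\star = T^\star$, and forces $t^\star \in [0, T^\star)$.

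In the remaining case, if $t^\star = 0$ then $X_t \leq X_0$ for every $t$, and the first inequality of \Cref{model_assump} directly yields $P(X_t) \geq P(X_0) > c$. If instead $t^\star \in (0, T^\star)$, I would combine the first- and second-order optimality conditions at the interior maximum. Since $u_t \geq \alpha$ on $[0,T^\star]$ by \Cref{T^* lemma}, the dynamics near $t^\star$ read $\dot X_t = -\delta X_t + (u_t - \alpha)/\beta$, so $\ddot X_t = -\delta \dot X_t + \dot u_t / \beta$. The first-order condition $\dot X_{t^\star} = 0$ yields $u_{t^\star} = \alpha + \beta \delta X_{t^\star}$, while the second-order condition $\ddot X_{t^\star} \leq 0$ reduces to $\dot u_{t^\star} \leq 0$; substituting both relations into \eqref{eq:FB_u} gives
\begin{align*}
P(X_{t^\star}) \geq c + \frac{(r+\delta)(\alpha + \beta \delta X_{t^\star})}{h} > c,
\end{align*}
and hence $X_{t^\star} < X^\star$ in this case as well.

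The main obstacle is the borderline case $t^\star = T^\star$, where the installation rate $K$ touches zero and one might suspect a left-sided kink in $X$ permitting an extra maximum; the resolution is the continuity of $\dot X$ at $T^\star$ together with the strict positivity of $X_{T^\star}$ provided by \eqref{eq:lower_bound}, which forces $\dot X_{T^\star} < 0$ and thus a strict left-decrease. Apart from that, the interior case is a routine maximum-principle computation exploiting the sign of $\ddot X$, and the case $t^\star = 0$ follows immediately from the quantitative inequality in \Cref{model_assump}.
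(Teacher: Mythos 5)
Your proof is correct, but it takes a genuinely different route from the paper's. The paper first establishes $P(X_{T^\star})\geq c$ by evaluating \eqref{eq:FB_u} at the crossing time $T^\star$ (where $u_{T^\star}=\alpha$ and $\dot u_{T^\star}\leq 0$), propagates this over $[T^\star,T]$ via the exponential decay of $X$, and then rules out an excursion of $P(X)$ below $c$ on $[0,T^\star]$ by a contradiction at the two endpoints $t_1<t_2$ of a hypothetical excursion: $\dot u>0$ on $(t_1,t_2)$ forces $u_{t_1}<u_{t_2}$, which is incompatible with $X_{t_1}=X_{t_2}=P^{-1}(c)$ and $\dot X_{t_1}\geq \dot X_{t_2}$. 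You instead locate the global maximizer $t^\star$ of $X$, exclude $[T^\star,T]$ by the same decay observation, and apply first- and second-order optimality conditions at an interior maximum; this is legitimate because on $[0,T^\star]$ \Cref{T^* lemma} removes the positive part, so $X$ is $C^2$ there. Your interior computation is essentially the paper's endpoint comparison collapsed to a single point, and it even yields the sharper uniform bound $P(X_t)\geq c+(r+\delta)\alpha/h$, whereas the paper's excursion argument avoids second derivatives (hence needs less regularity) but only delivers $P(X_t)\geq c$ on $[0,T^\star]$. The one step worth making explicit is $\dot X_{T^\star}=-\delta X_{T^\star}<0$, which requires $X_{T^\star}>0$: this holds whenever $X_0>0$, and otherwise follows because $u_0>\alpha$ (from the proof of \Cref{T^* lemma}) forces strictly positive installation near $t=0$; in the fully degenerate case $X\equiv 0$ the claim is immediate from \Cref{model_assump}.
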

\begin{proof}
Let $T^\star \in [0,T]$ given by \Cref{T^* lemma}.
We first prove that $P(X_t)\geq c$ for all $t \in [T^\star,T]$.
At time $T^\star$, we have by definition $u_{T^\star}=\alpha$ and $\dot{u}_{T^\star } \leq 0$, implying using \eqref{eq:FB_u} that
\begin{equation*}
0\geq \dot{u}_{T^\star }
=(r+\delta)u_{T^\star }-(P(X_{T^\star })-c)h
=(r+\delta)\alpha-(P(X_{T^\star })-c)h.
\end{equation*}
In particular, $P(X_{T^\star })-c\geq0$. Moreover, for $t\geq{T^\star }$, we have $u_t \leq \alpha$, and therefore $X$ satisfies the ODE $\dot{X}_t+\delta X_t=0$. In other words, $X$ decay at rate $\delta$ starting from $T^\star$, and thus $P(X_t) \geq P(X_{T^\star }) \geq c$ for $t\geq{T^\star }$.

We now prove the inequality for $t \in [0,T^\star]$, assuming by contradiction that $P(X_s)<c$ for some $s\in[0,T^\star]$. Note that by \Cref{model_assump} and the previous reasoning, we have 
\begin{align*}
    P(X_0)>\dfrac{r+\delta}{h}\alpha + c \geq c, \quad \text{and} \quad P(X_{T^\star }) \geq c.
\end{align*}
Therefore, there must exist an interval $(t_1,t_2) \subset [0,T^\star]$ for $t_1 < t_2 < T^\star$, on which $P(X_t)<c$ and $P(X_{t_1})=P(X_{t_2})=c$, with $P$ being decreasing near $t_1$ and increasing near $t_2$. More precisely, there exists $\varepsilon > 0$ such that $P(X_t)\geq c$ for $t\in [t_1-\varepsilon,t_1] \cup [t_2,t_2+\epsilon]$, while $P(X_t)< c$ for $t\in(t_1,t_2)$. Since $P$ is  strictly
decreasing,
we have $X_{t_1}=X_{t_2}=P^{-1}(c)$, where $P^{-1}$ denotes the inverse of $P$, and $\dot{X}_{t_1}\geq \dot{X}_{t_2}$.
Moreover, since $t_1 < t_2 < T^\star$, we have by \Cref{T^* lemma} that $u_t\geq\alpha$ for $t\in[t_1,t_2]$. The system of equations satisfied by the solution $(X,u)$ on the interval $(t_1,t_2)$ is thus 
\begin{equation*}
\dot{X}_t+ \delta X_t = \frac{1}{\beta}(u_t-\alpha),\qquad
\dot{u}_t=(r+\delta)u_t-h(P(X_{t})-c).
\end{equation*}
Since $P(X_t)< c$ for $t\in(t_1,t_2)$, we have $\dot{u}_t > 0$, and thus $u_{t_1}<u_{t_2}$. However, this is in contradiction with the fact that $X_{t_1}=X_{t_2}=P^{-1}(c)$ and $\dot{X}_{t_1}\geq0\geq \dot{X}_{t_2}$, since
\begin{align*}
    \frac{1}{\beta}(u_{t_1}-\alpha) = \dot{X}_{t_1}+\delta X_{t_1} = \dot{X}_{t_1}+\delta P^{-1}(c) \geq \dot{X}_{t_2}+\delta P^{-1}(c) = \dot{X}_{t_2}+\delta X_{t_2}=\frac{1}{\beta} (u_{t_2}-\alpha).
\end{align*}
This concludes the proof that $P(X_t)\geq c$ for all $t\in[0,T]$.\end{proof}

Under \Cref{model_assump}, and thanks to \Cref{T^* lemma}, we have $u_t\geq\alpha$ for $t\leq {T^\star}$ and $u_t\leq\alpha$ for $t\geq {T^\star}$. In particular, on $t\leq {T^\star }$, the ODE \eqref{eq:FB_X} for $X$ becomes
\begin{align*}
\dot{X}_t+\delta X_t=\frac{1}{\beta}(u_t-\alpha). 
\end{align*}
Differentiating the previous ODE and replacing $\dot{u}$ using \eqref{eq:FB_u} yields the following two-point boundary value problem,
\begin{equation}
\ddot{X}_t=r\dot{X}_t+(r+\delta)\left(\delta X_t+\frac{\alpha}{\beta}\right)-\frac{h}{\beta}\left(P(X_t)-c\right),\label{Xdoubledot}
\end{equation}
with boundary conditions $X_0 \geq 0$ fixed and $\delta X_{T^\star }+\dot{X}_{T^\star }=0$, since $u_{T^\star} = \alpha$.
Then, for $t\geq {T^\star}$, we have $u_t\leq\alpha$, implying in particular that there is no installation after time $T^\star$. We deduce that for $t \in [T^\star,T]$,
\begin{align*}
    X_t=X_{T^\star }  \erm^{-\delta(t-{T^\star })}, \quad u_{T^\star }= h \int_{T^\star }^{T} \erm^{-(r+\delta)(t-{T^\star })} \big(P \big(X_{T^\star } \erm^{-\delta(t-{T^\star })} \big)-c\big) \drm t = \alpha.
\end{align*}

\subsection{Linear price function}\label{limhomogeneous}
In this section, we focus on the price $P$ being a linear decreasing function, \textit{i.e.}
\begin{align*}
    P(x)=d_1-d_2x \quad\mbox{for some } d_1,d_2>0,
\end{align*}
and assume \Cref{model_assump} is satisfied. 
Under this linear specification for the price, we can obtain a semi-explicit formula for the optimal capacity.
\begin{proposition}\label{Proposition}
    For a linear price function as above, we have
\begin{align}\label{eq:linear_sol}
X_t &= 
\begin{cases}
    C \erm^{r_{1} t}+D \erm^{r_{2} t}+\theta, &\mbox{ for }t\leq {T^\star },\\
    X_{T^\star }  \erm^{-\delta(t-{T^\star })}, &\mbox{ for }t\in[{T^\star },T],
\end{cases} \\[1em]
\text{ with } \; \; \theta &\coloneq \frac{{h}(d_1-c)-(r+\delta){\alpha }{}}{\beta(r+\delta)\delta+{h}d_2}, \; 
C \coloneq \frac{(X_0+\theta)(r_2+\delta) -\delta\theta  \erm^{-r_2 T^\star }}
{(r_2+\delta) -  \erm^{(r_1-r_2) T^\star }(r_1+\delta)}, \;
D \coloneq X_0-C-\theta, \nonumber \\
\text{ and } \; r_{1,2} &\coloneq \frac12\bigg(r\pm \sqrt{r^2+4\bigg((r+\delta)\delta+\frac{h}{\beta} d_2\bigg)}\bigg), \nonumber
\end{align}
and where $T^\star$ is solution to the following transcendental equation,
\begin{equation}\label{Tstareqn}
\alpha = \frac{h(d_1-c)}{r+\delta} \big( 1- \erm^{-(r+\delta)(T-{T^\star }
)} \big) -\frac{hd_2}{r+2\delta} \big( 1- \erm^{-(r+2\delta)(T-{T^\star }
)} \big) \big(C \erm^{r_1{T^\star }
}+D \erm^{r_2{T^\star }
}+\theta \big).
\end{equation}
\end{proposition}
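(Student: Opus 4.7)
The plan is to exploit \Cref{T^* lemma}, which partitions $[0,T]$ into an installation region $[0,T^\star]$ where $u_t\geq\alpha$, and a no-installation region $[T^\star,T]$ where $u_t\leq\alpha$, and to solve the forward-backward system \eqref{eq:FB_X}--\eqref{eq:FB_u} explicitly on each region by exploiting the linearity of $P$.

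On $[T^\star,T]$, we have $(u_t-\alpha)^+=0$, so \eqref{eq:FB_X} reduces to $\dot X_t+\delta X_t=0$, which immediately yields the second branch $X_t=X_{T^\star}\erm^{-\delta(t-T^\star)}$ of the formula. On $[0,T^\star]$, we have $(u_t-\alpha)^+=u_t-\alpha$, so the first-order system is linear; this is precisely the regime in which \eqref{Xdoubledot} was derived. With $P(X)=d_1-d_2X$, equation \eqref{Xdoubledot} becomes a second-order linear ODE with constant coefficients,
\begin{equation*}
\ddot X_t - r\dot X_t - \Bigl((r+\delta)\delta+\tfrac{hd_2}{\beta}\Bigr) X_t \;=\; (r+\delta)\tfrac{\alpha}{\beta}-\tfrac{h(d_1-c)}{\beta}.
\end{equation*}
The associated characteristic polynomial $\rho^2-r\rho-\bigl((r+\delta)\delta+hd_2/\beta\bigr)=0$ has exactly the two real roots $r_{1,2}$ displayed in the proposition, and a constant particular solution is readily found to be $\theta$. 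Hence the general solution is $X_t=C\erm^{r_1 t}+D\erm^{r_2 t}+\theta$ for two constants $C,D$ to be determined.

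To fix $C$ and $D$, I would use two boundary conditions. The prescribed initial condition $X_0$ gives $C+D+\theta=X_0$, i.e.\ $D=X_0-C-\theta$. At the free boundary $T^\star$, continuity of $u$ together with the identity $u_{T^\star}=\alpha$ inherited from \Cref{T^* lemma} forces, via \eqref{eq:FB_X}, the smooth-pasting relation $\dot X_{T^\star}+\delta X_{T^\star}=0$ (which makes $X_t=X_{T^\star}\erm^{-\delta(t-T^\star)}$ join $C\erm^{r_1t}+D\erm^{r_2t}+\theta$ in $C^1$). Substituting the ansatz and eliminating $D$ yields a single linear equation for $C$, whose solution is the stated expression. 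Finally, to obtain the transcendental equation characterizing $T^\star$, I would integrate the backward ODE \eqref{eq:FB_u} on $[T^\star,T]$ with terminal condition $u_T=0$, giving $u_{T^\star}=h\int_{T^\star}^T\erm^{-(r+\delta)(s-T^\star)}(P(X_s)-c)\,\drm s$. Plugging in $X_s=X_{T^\star}\erm^{-\delta(s-T^\star)}$ and $P(X_s)=d_1-d_2X_s$, the two resulting integrals compute in closed form, and imposing $u_{T^\star}=\alpha$ produces exactly \eqref{Tstareqn}, with $X_{T^\star}=C\erm^{r_1T^\star}+D\erm^{r_2T^\star}+\theta$ on the right-hand side.

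The proof is essentially bookkeeping once the case split from \Cref{T^* lemma} is available: no deep estimate is needed. The only delicate part is the algebra when solving the $2\times 2$ system for $(C,D)$ — in particular, keeping track of signs and of the factor $\erm^{-r_2T^\star}$ that arises when normalizing the smooth-pasting equation — and verifying that the implicit assumption $T^\star\in(0,T)$ used in the construction is consistent with \Cref{model_assump}, which was already established in \Cref{T^* lemma}. Uniqueness of the solution is not at issue here, since existence and uniqueness of $(X,u)$ solving \eqref{eq:FB_X}--\eqref{eq:FB_u} is guaranteed by \Cref{prop:existence_uniqueness}; the proposition merely expresses that unique solution in closed form.
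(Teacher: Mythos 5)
Your proposal is correct and follows essentially the same route as the paper: split at $T^\star$ using \Cref{T^* lemma}, solve the constant-coefficient second-order ODE from \eqref{Xdoubledot} on $[0,T^\star]$ with the boundary conditions $X_0$ fixed and $\dot X_{T^\star}+\delta X_{T^\star}=0$, and obtain \eqref{Tstareqn} by integrating \eqref{eq:FB_u} over $[T^\star,T]$ and imposing $u_{T^\star}=\alpha$. The only detail the paper records that you leave implicit is that the denominator in the expression for $C$ is nonzero, which follows from $r_2+\delta<0<r_1+\delta$.
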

\begin{proof}
Recall that $T^\star$ is defined in \Cref{T^* lemma}. As already mentioned, there is no installation after time $T^\star$, thus $X_t=X_{T^\star } \erm^{-\delta(t-{T^\star })}$ for $t \in [T^\star,T]$ as stated in \eqref{eq:linear_sol}. For $t\leq {T^\star }$, using the linear price in \eqref{Xdoubledot} we derive
\begin{equation*}
\ddot{X}_t-r\dot{X}_t-\left((r+\delta)\delta+\frac{h}{\beta} d_2\right)X_t=
\frac{1}{\beta}\left(\alpha(r+\delta)- h(d_1-c)\right),
\end{equation*}
with boundary conditions $X_0 \geq 0$ fixed and $\delta X_{T^\star}+\dot{X}_{T^\star }=0$.
Straightforward, albeit lengthy, computations show that the candidate solution $X$ defined in \eqref{eq:linear_sol} is indeed solution to this second-order linear ODE boundary-value problem. In particular, $r_1,r_2$ are the two solutions of the quadratic equation $x^2-rx-\left((r+\delta)\delta+\frac{h}{\beta}d_2\right)=0$, with $r_1>r+\delta>-\delta>r_2$. The constants $C$ and $D$ are derived from the boundary conditions. We further verify that the denominator in the definition of $C$ is not zero, because $r_2+\delta<0<r_1+\delta$. Finally, we find ${T^\star }\in(0,T)$ from $u_{T^\star } = \alpha$, which leads to \eqref{Tstareqn}. Finding ${T^\star}$ from this equation fully determines $C$, and thus $D$, which describe the solution $X$.
\end{proof}

Since the linear price is Lipschitz continuous, the previous solution is unique by \Cref{prop:existence_uniqueness}, implying in particular that there exists a unique solution $T^\star$ to Equation \eqref{Tstareqn}. This will also be observed numerically in the following section.

\subsection{Numerical solution}\label{numerics}

In this section, we numerically solve \eqref{eq:FB_X}--\eqref{eq:FB_u} for two different price functions $P$, linear---as in the previous section---and inverse, and for the parameters used in \cite{aid2020entry,alasseur2023mean}:
\begin{equation}
\begin{split}\label{eq:param_numerics}
X_0&=30\text{GW},\quad c=15\$/\text{MWh}, \quad r=0.1\text{year}^{-1},\quad
\delta=\frac{\log2}{10}\text{year}^{-1},\\
h&=3000\frac{\text{hours}}{\text{year}},\quad\alpha=1400\$/\text{kW},\quad\frac{1}{\beta}=5\text{MW}^2/(\$\text{year}). 
\end{split}
\end{equation}

The comparison principle in Lemma \ref{lem:ODE_forward} suggests a shooting method to numerically solve the forward-backward system \eqref{eq:FB_X}--\eqref{eq:FB_u}: starting from an arbitrary value $u_0$, one can solve the forward system \eqref{eq:F_system} to obtain $u_T$, and then update the choice of $u_0$ depending on the sign of $u_T$. More precisely, by the comparison principle, one should increase the guess for $u_0$ if $u_T<0$, and decrease otherwise. 
Since both the linear and inverse price functions are assumed to be Lipschitz continuous, this shooting method is guaranteed to converge to the unique solution.

\vspace{-1em}

\paragraph{Linear price function.} The previous numerical scheme is first implemented for a linear price function of the form $P(x)=d_1-d_2x$ with $d_1=500$\$/MWh, $d_2=10^{-2}$\$/MW$^2$h, and compared with another numerical method, consisting of computing $T^\star$ by numerically solving \eqref{Tstareqn} and then replacing in the explicit formulas established in \Cref{Proposition}. The results are displayed in \Cref{fig:two-panel} for a fixed time horizon $T=5$. More precisely, \Cref{numerics_linear_left} illustrates how to find $T^\star$ such that \eqref{Tstareqn} is satisfied. Then, \Cref{numerics_linear_right} confirms that the solution obtained with the shooting method coincides on $[0,T]$ with the semi-explicit solution.

The main economic observation from the numerical results illustrated in \Cref{numerics_linear_right} is the decay of capacity at the natural rate~$\delta$ after the critical time~$T^\star \approx 0.25$. More precisely, the total installed capacity initially increases rapidly, reflecting the strong early incentives to invest. However, due to the finite planning horizon, a critical threshold time~$T^\star$ emerges, beyond which further investment is no longer profitable. This behavior contrasts sharply with the infinite-horizon results of~\cite{alasseur2023mean}, where investment incentives remain constant over time. Indeed, in our finite time horizon setting, since no profits can be generated after time~$T$, producers find it optimal to cease installing new capacity sufficiently in advance of the terminal date to avoid incurring costs that cannot be recouped. In other words, once the remaining time falls below a certain threshold, the opportunity cost of investment outweighs any remaining benefits. Consequently, investment stops, and capacity decays passively at the rate~$\delta$, in line with the theoretical predictions established in \Cref{T^* lemma}. This numerical example thus provides a clear illustration of how finite-horizon considerations fundamentally alter investment dynamics compared to the infinite-horizon benchmark.

\begin{figure}[h!]
    \centering
    \begin{minipage}[t]{0.35\textwidth}
        \centering
        \includegraphics[width=\textwidth]{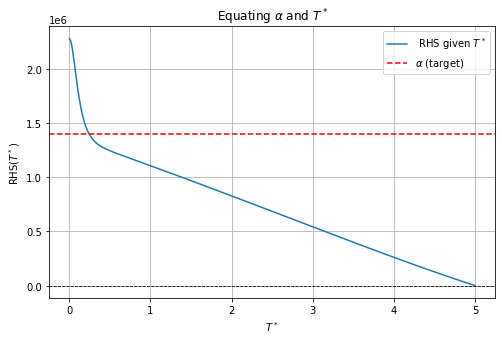}
        \subcaption{\footnotesize $\alpha$ and $T^\star $ equation}
        \label{numerics_linear_left}
    \end{minipage}
    \begin{minipage}[t]{0.35\textwidth}
        \centering
        \includegraphics[width=0.95\textwidth]{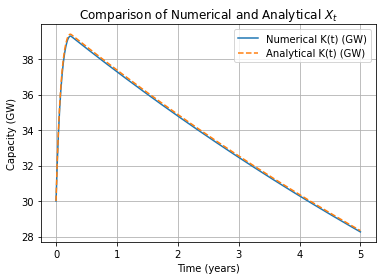}
        \subcaption{\footnotesize Capacity dynamics}
        \label{numerics_linear_right}
    \end{minipage}
    \caption{Capacity dynamics and corresponding $T^\star$ with linear price function}
    \label{fig:two-panel}
\end{figure}

\vspace{-1em}

\paragraph{Inverse price function.} We then consider as in \cite{alasseur2023mean} an `inverse' price function of the form $P(x)=p/x$, for $p=6.5\times10^6\$/\text{h}$ and the parameters previously described in \eqref{eq:param_numerics}. We first emphasize that, although the price function $P(x) = p/x$ is not Lipschitz continuous on $\mathbb{R}_+$, this does not cause issues for the dynamics. Indeed, the installed capacity $X_t$ satisfies the lower bound $X_t \geq X_0  \erm^{-\delta t}$, where $X_0 > 0$ is the initial capacity. Thus, $X_t$ remains bounded away from $0$ for all $t \in [0,T]$, and $P(x)$ is effectively Lipschitz on the reachable set $\{ x \geq X_0  \erm^{-\delta T} \}$. This guarantees the well-posedness of the solution.

Figure~\ref{fig:M2_real_numbers} presents the dynamics of the total capacity~$X$ over time in the case of an inverse price function, for three different finite time horizons ($T=5$, $10$, and $20$ years). We first observe that the solution satisfies \Cref{T^* lemma}: as in the previous case with a linear price, there is an initial increase in total capacity, followed by decay at the natural rate~$\delta$. 

\begin{figure}[htb]
    \centering
    
    \begin{subfigure}[b]{0.3\textwidth}
        \centering
        \includegraphics[width=\textwidth]{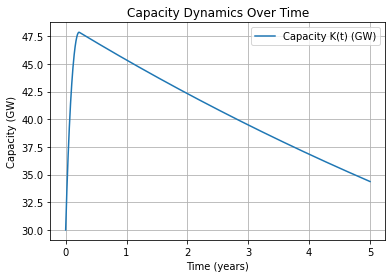}
        \caption{\footnotesize $T = 5$ years}
        \label{fig:M2_real_T5}
    \end{subfigure}
    \hfill
    \begin{subfigure}[b]{0.3\textwidth}
        \centering
        \includegraphics[width=\textwidth]{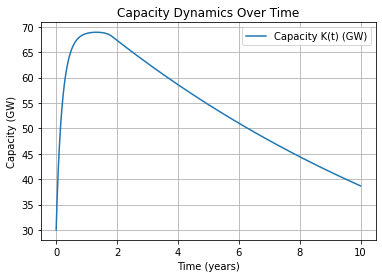}
        \caption{\footnotesize $T = 10$ years}
        \label{fig:M2_real_T10}
    \end{subfigure}
    \hfill
    \begin{subfigure}[b]{0.3\textwidth}
        \centering
        \includegraphics[width=\textwidth]{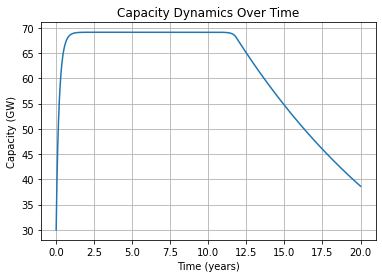}
        \caption{\footnotesize $T = 20$ years}
        \label{fig:M2_real_T20}
    \end{subfigure}
    \caption{Capacity dynamics with inverse price function for different time horizons}
    \label{fig:M2_real_numbers}
\end{figure}

For the longest time horizons ($T=20$ years), the total capacity rapidly approaches the infinite-horizon stationary state studied in~\cite{alasseur2023mean}, and remains near this steady state for most of the planning horizon. This illustrates a form of \emph{turnpike effect} (see, for \textit{e.g.} \cite{carmona2024leveraging}): for sufficiently large maturity, the optimal policy drives the system close to the long-run equilibrium early on and maintains it there as long as `possible', \textit{i.e.} here until it is no longer profitable to invest. Interestingly, when the horizon is larger than $10$ years, the simulations indicate that the cessation of investment occurs approximately $8.5$ years before the terminal date, suggesting that the critical time $T^\star \approx T - 8.5$ for the given parameters. 

In contrast, when the planning horizon is short (\textit{e.g.}, $T=5$), the total capacity increases briefly but remains below the infinite-horizon steady state, before decaying at the natural rate~$\delta$, as also seen in \Cref{numerics_linear_right} for the linear price case with the same maturity.  This behavior reflects the fact that producers anticipate the approaching horizon and adjust their decisions to avoid incurring installation costs that cannot be recovered.

\section{Heterogeneous producers}\label{sec:heterogeneous}
We now approximate the finite player game by one in which there is a continuum of producers whose capacities at time $t=0$ are distributed according to the continuous density function $m_0(x)$ on $x\geq0$, which smooths out the actual discrete (empirical) distributions of initial installed capacities $(x^{(i)}_0)_{i \in \{0,\dots,N\}}$. 

In the approximating MFG, we consider $x_t$ the installed capacity of a representative producer at time $t \in [0,T]$. 
Analogous to \eqref{eq:prod_dynamics}, we have 
\begin{equation}\label{eq:dynamic_x_heterogeneous}
    \drm x^{}_t=(-\delta x^{}_t+\nu^{}_t)\,  \drm t,
\end{equation}
where $\nu_t$ is their installation rate at time $t \in [0,T]$. 
We then let $\bar{x}_t$ and $\bar{\nu}_t$ denote, respectively, the mean installed capacity and mean installation rate at time $t$. 
We adopt the following approximations of the aggregate quantities $X$ and $\Nu$,
\begin{equation}\label{eq:approx_aggregate}
X_t \coloneq \sum_{j=0}^{N}x^{(j)}_t\approx x_t+N\bar{x}_t,\qquad
\Nu_t \coloneq \sum_{j=0}^{N}{\nu}^{(j)}_t\approx {\nu}_t+N{\bar{\nu}_t}, \quad t \in [0,T],
\end{equation}
which restore the individual impacts on the aggregates, in the sense that now
$$\frac{\partial X_t}{\partial x_t} = 1, \quad \text{ and } \quad \frac{\partial \Nu_t}{\partial \nu_t} = 1.$$
This continuum aggregate game approximation was used in the cryptocurrency-mining model in \cite{li2024mean}, and is further analyzed in \cite{cags}. This can also be related to what is called \emph{$\lambda$-interpolated mean field games}, introduced in \cite{carmona2023nash} and generalized in \cite{dayanikli2025cooperation}. As we will see in \Cref{V_concave}, under the preceding approximations and the additional concavity assumption on the function $x \longmapsto xP(x+N\bar{x})$, producers with smaller installed capacity achieve a higher profit per unit of new energy installed than producers with larger capacity. This, in turn, results in a higher installation rate for smaller producers.

Using the approximations \eqref{eq:approx_aggregate} in the original objective \eqref{eq:cost_general}, we are led to consider the following dynamic value function, for $(t,x) \in [0,T] \times \R_+$,
\begin{equation}\label{value}
V(t,x)=\sup_{\nu\geq0}\,\int_{t}^{T} \erm^{-r(s-t)}\Big[(P(x_s + N\bar{x}_s)-c)hx_s-\nu_s(\alpha+\beta(\nu_s+N\bar{\nu}_s))\Big]\, \drm s.
\end{equation}
For fixed continuum mean quantities $\bar{x}$ and $\bar{\nu}$, the representative producer's Hamiltonian is
\begin{align*}
    \sup_{\nu_t \geq0} \left\{\left(  P(x+N\bar{x}_t) - c \right) hx  -\nu_t(\alpha+\beta(\nu_t+N\bar{\nu}_t)) + \frac{\partial V}{\partial x} \left( -\delta x + \nu_t \right) \right\},
\end{align*}
from which we derive the associated optimal response
\begin{equation}\label{nustar}
\nu^\star(t,x)=\frac{1}{2\beta} \bigg(\dfrac{\partial V}{\partial x}-\alpha-\beta N\bar{\nu}_t \bigg)^+, \quad (t,x) \in [0,T] \times \R_+.
\end{equation}
The value function defined in \eqref{value} then satisfies the following Hamilton-Jacobi-Bellman (HJB) equation:
\begin{equation}\label{HJB}
\frac{\partial V}{\partial t}-rV-\delta x\frac{\partial V}{\partial x}+(P(x+N\bar{x}_t)-c)hx+\frac{1}{4\beta} \bigg[ \bigg(\dfrac{\partial V }{\partial x}-\alpha-\beta N\bar{\nu}_t \bigg)^+ \bigg]^2=0,
\end{equation}
for $(t,x) \in [0,T) \times \R_+$, with terminal condition $V(T,x)=0$.
The associated Fokker-Planck (FP) equation for the density $m(t,x)$ of installed capacities at time $t>0$ is 
\begin{equation}\label{FP}
\frac{\partial }{\partial t}m+\frac{\partial }{\partial x}\left[m\left(-\delta x+\frac{1}{2\beta} \bigg(\frac{\partial V}{\partial x}-\alpha-\beta N\bar{\nu}_t \bigg)^+ \right)\right]=0,
\end{equation}
with given initial condition $m(0,x)=m_0(x)$. The preceding system of equations \eqref{HJB}--\eqref{FP} characterizes a mean field game of state and control, approximating the original $N$-player game. Integrating~\eqref{nustar} over~$m(t,\cdot)$ and rearranging yields to the following fixed-point condition for the mean installation rate at equilibrium,
\begin{align}\label{nubarformula}
\bar{\nu}_t=\frac{\int_{\mathcal{A}_t} (\frac{\partial V}{\partial x}-\alpha) m(t,x) \drm x}{2\beta+\beta N\int_{\mathcal{A}_t}m(t,x)\,\drm x},
\quad \text{with } \mathcal{A}_t \coloneq \bigg\{x:\frac{\partial V}{\partial x}-\alpha-\beta N \bar{\nu}_t>0\bigg\}, \quad t \in [0,T].
\end{align}
The dynamics of the mean state at the mean field equilibrium is then given by
\begin{align*}
   \dot{\bar{x}}_t+\delta \bar{x}_t=\bar{\nu}_t, \quad t \in [0,T],
\end{align*}
with initial condition $\bar{x}_0=\int xm_0(x)\,\drm x$.

In \Cref{homogeneous} we will explain how this mean field approximation generalize the case of homogeneous producers studied in \Cref{section_hom}. Then, in \Cref{properties}, we will prove some preliminary results on the value function that will be helpful to solve in \Cref{linear_section} the HJB--FP system \eqref{HJB}--\eqref{FP}. 

\subsection{Connection to the homogeneous producers case}\label{homogeneous}

Under the following simplifications, one recovers the specific setting studied in \Cref{section_hom}:
\begin{enumerate}\itemsep -2pt
    \item[$(i)$] All producers start with the same capacity.
    \item[$(ii)$] There is no direct individual impact on the price.
\end{enumerate}
First, by $(i)$, the producers remain indistinguishable at all times, in the sense that their individual capacities are all identical, and in particular \(m(t,\cdot)\) is a Dirac delta function for every \(t \in [0,T]\). Moreover, their optimal controls coincide, and we thus deduce from \eqref{nustar}
$$\bar{\nu}_t = \nu^\star(t,x_t) = \frac{1}{\beta(N+2)} \bigg(\dfrac{\partial V}{\partial x}(t,x_t)-\alpha \bigg)^+, \quad t \in [0,T],$$
which coincide with \eqref{nubarformula} because either the measure of $m$ is positive in $\mathcal{A}_t$ or it is zero and $\frac{\partial V}{\partial x}\leq\alpha$ on $\mathcal{A}_t$. Using this in the dynamic of the mean state $\bar{x}$ at the mean field equilibrium, with the approximation $\frac{N+1}{N+2}\approx 1$, we derive the required dynamics for the total capacity $X_t = (N+1)\bar{x}_t$, namely
\begin{equation}\label{new_2}
\dot{X}_t+\delta X_t=\frac{1}{\beta}\left(u_t-\alpha\right)^+, \quad \text{with } \; u_t \coloneq \frac{\partial V}{\partial x}(t,x_t), \quad t \in [0,T].
\end{equation}

It remains to compute the dynamics of $u_t$ defined above, the marginal value along the optimal trajectory, in the spirit of Hotelling's rule for the economics of exhaustible resources (see \cite{chan2017fracking} for a MFG version). First, by $(ii)$, an individual producer has no direct impact on the price, meaning that its impact is only through the mean capacity. This boils down to replacing $P(x_t + N\bar{x}_t)$ by $P((N+1)\bar{x}_t)$, or $P(X_t)$ for fixed total capacity $X_t$, $t \in [0,T]$. Using this in \eqref{HJB}, and differentiating with respect to the individual capacity $x$, we obtain   
\begin{align*}
\frac{\partial^2 V}{\partial t \partial x}-r\frac{\partial V}{\partial x}-\delta \frac{\partial V}{\partial x}-\delta x_t\frac{\partial^2 V}{\partial x^2}+(P(X_t)-c)h+\frac{\left(\frac{\partial V}{\partial x}-\alpha-\beta N\bar{\nu_t} \right)^+}{2\beta}\frac{\partial ^2V}{\partial x^2}=0.
\end{align*}
Noticing that $\frac{1}{2\beta} \left(\frac{\partial V}{\partial x}-\alpha-\beta N\bar{\nu_t} \right)^+=\nu^\star(t,x)$ and $-\delta x_t+\nu^\star(t,x_t)=\dot{x}_t$, the above PDE becomes 
\begin{align*}
\frac{\partial^2 V}{\partial t \partial x}-(r+\delta)\frac{\partial V}{\partial x}+h(P(X_t)-c)+\dot{x}_t\frac{\partial ^2 V}{\partial x^2}=0.
\end{align*}
Finally, by definition of $u$ in \eqref{new_2} and using the chain rule $\dot{u}_t=\frac{\partial^2 V}{\partial t\partial x}+\dot{x}_t\frac{\partial^2V}{\partial x^2} $, we deduce 
\begin{equation}\label{new_1}
\dot{u}_t=(r+\delta)u_t-h(P(X_t)-c),
\end{equation}
with $u_T=0$ since $V(T,x)=0$. Therefore, under the two simplications $(i)$--$(ii)$, the general HJB--FP system \eqref{HJB}--\eqref{FP} boils down to the ODE system \eqref{new_1}--\eqref{new_2}, which coincides with the system \eqref{eq:FB_X}--\eqref{eq:FB_u} studied in \Cref{section_hom}. The two methods, Pontryagin maximum principle and HJB, are thus consistent in this setting.

\subsection{Properties of the value function}\label{properties}
In this section, we establish several properties of the value function, which will guide the numerical resolution of \eqref{HJB}--\eqref{FP}. The following result highlights that producers with smaller capacity install at a larger rate. In fact, we will prove that the optimal strategy for producers with large capacity is to stop installing new capacity.

\begin{lemma}\label{V_concave}
Assuming that the revenue $x \longmapsto x P(x+N\bar{x})$ is concave, then for all $t \in [0,T]$, $x \longmapsto V(t,x)$ is also concave, and $x \longmapsto \nu^\star(t,x)$ is non-increasing.
\end{lemma}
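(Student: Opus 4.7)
The plan is to establish concavity of $V(t,\cdot)$ by the classical ``convex-combination of admissible controls'' argument, exploiting the fact that the state dynamics \eqref{eq:dynamic_x_heterogeneous} are affine in $(x,\nu)$, and that the running reward
\[
g_s(x,\nu) \;\coloneq\; \big(P(x + N\bar{x}_s) - c\big)h x - \nu\big(\alpha + \beta(\nu + N\bar{\nu}_s)\big)
\]
is concave jointly in $(x,\nu)$ for each fixed $s \in [t,T]$, with the mean field quantities $\bar{x}_s$ and $\bar{\nu}_s$ treated as fixed, exogenous functions of time (they are equilibrium objects which the representative agent takes as given). The monotonicity of $\nu^\star$ then falls out of the explicit formula \eqref{nustar}.

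The first step is to check the joint concavity of $g_s$. Writing
\[
g_s(x,\nu) \;=\; h\,\big[x P(x + N\bar{x}_s)\big] \;-\; c h x \;-\; \alpha \nu \;-\; \beta\nu^2 \;-\; \beta N \bar{\nu}_s \nu,
\]
the first bracket is concave in $x$ by hypothesis, the second and third terms are linear, and $-\beta\nu^2 - \beta N\bar{\nu}_s\nu$ is a concave quadratic in $\nu$ (since $\beta > 0$). As a sum of functions that are concave in $x$ alone or in $\nu$ alone (and no coupling term), $g_s$ is concave jointly in $(x,\nu)$.

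The second step uses the linearity of the dynamics. Fix $t \in [0,T]$, two states $x_1, x_2 \geq 0$ and $\lambda \in [0,1]$; set $x_\lambda \coloneq \lambda x_1 + (1-\lambda)x_2$. For $\varepsilon > 0$, choose admissible controls $\nu^1, \nu^2 \geq 0$ that are $\varepsilon$-optimal for $V(t,x_1)$ and $V(t,x_2)$, and denote by $x^1, x^2$ the corresponding trajectories started from $x_1, x_2$. Define $\nu^\lambda \coloneq \lambda \nu^1 + (1-\lambda)\nu^2$, which is admissible (nonnegativity is preserved by convex combinations), and let $x^\lambda$ be the associated trajectory from $x_\lambda$. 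Because \eqref{eq:dynamic_x_heterogeneous} is affine, the solution map is affine as well, and one checks that $x^\lambda_s = \lambda x^1_s + (1-\lambda) x^2_s$ for all $s \in [t,T]$. Applying the joint concavity of $g_s$ pointwise, multiplying by $\erm^{-r(s-t)} \geq 0$, integrating, and finally using the suboptimality of $\nu^\lambda$ for $V(t,x_\lambda)$ yields
\[
V(t,x_\lambda) \;\geq\; \lambda V(t,x_1) + (1-\lambda) V(t,x_2) - \varepsilon,
\]
and letting $\varepsilon \downarrow 0$ gives the desired concavity.

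The final step is immediate: a concave function on $\R_+$ has a non-increasing (one-sided) derivative wherever it is defined, so $x \longmapsto \frac{\partial V}{\partial x}(t,x)$ is non-increasing in $x$. Since $\bar{\nu}_t$ does not depend on the individual state $x$, the formula
\[
\nu^\star(t,x) \;=\; \frac{1}{2\beta}\bigg(\frac{\partial V}{\partial x}(t,x) - \alpha - \beta N \bar{\nu}_t\bigg)^{\!+}
\]
from \eqref{nustar} expresses $\nu^\star(t,\cdot)$ as the positive part of a non-increasing function, hence is itself non-increasing in $x$. I do not expect any serious obstacle: the only mildly delicate point is passing through $\varepsilon$-optimal controls to avoid assuming a priori existence of a maximizer, and, if one wants the pointwise statement on $\partial V/\partial x$ rather than an a.e.\ statement, invoking the standard fact that a concave finite-valued function on $\R_+$ admits left and right derivatives everywhere and these are monotone.
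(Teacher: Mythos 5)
Your proposal is correct and follows essentially the same route as the paper's proof: both exploit the affine dynamics to show that the convex combination of two controlled trajectories is the trajectory driven by the convex combination of the controls, apply the concavity of the revenue in $x$ and of the quadratic cost in $\nu$ pointwise to the running reward, integrate, and take a supremum (the paper takes the supremum over arbitrary controls directly rather than passing through $\varepsilon$-optimal ones, but this is an immaterial difference). The deduction that $\nu^\star(t,\cdot)$ is non-increasing from \eqref{nustar} is identical in both.
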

\begin{proof}
Let $t \in [0,T]$, and consider two processes $x$ and $x^\prime$ satisfying \eqref{eq:dynamic_x_heterogeneous}, respectively starting from $x_t$, $x'_t$ at time $t$, and driven by two possibly different controls $\nu$, $\nu'$. In particular
\begin{align*}
    x_s=x_t \erm^{-\delta(s-t)}+\int_{t}^{s} \erm^{-\delta(s-y)}\nu_y \drm y, \;
    \text{ and } \;
    x^\prime_s=x^\prime_t \erm^{-\delta(s-t)}+\int_{t}^{s} \erm^{-\delta(s-y)}\nu^\prime_y \drm y, \quad s \in [t,T].
\end{align*}
Let $\lambda \in [0,1]$, define the process $x^\lambda$, starting from $x_t^\lambda \coloneq \lambda x_t + (1-\lambda)x_t^\prime$ at time $t$, and driven by the admissible control $\nu^\lambda \coloneq \lambda\nu+(1-\lambda)\nu'$. Note that we have
\begin{align*}
    x_s^\lambda = \lambda x_s + (1-\lambda)x_s^\prime = x^\lambda_t \erm^{-\delta(s-t)}+\int_{t}^{s} \erm^{-\delta(s-y)}\nu^\lambda_y \drm y, \quad s \in [t,T].
\end{align*}
Let now $s \in [t,T]$. From the concavity assumption on $x \longmapsto (P(x+N\bar{x})-c)x$, we deduce
\begin{align}\label{ineq1}
    &\ \lambda \big( P(x_s+N\bar{x}_s)-c \big) x_s + (1-\lambda) \big( P(x^\prime_s+N\bar{x}_s)-c \big) x^\prime_s
    \leq \big( P \big( x_s^\lambda +N\bar{x}_s \big) - c \big) x_s^\lambda.
\end{align}
Moreover, since the cost is linear-quadratic, we also have
\begin{align}\label{ineq2}
    -\lambda\nu_s(\alpha+\beta \nu_s+\beta N \bar{\nu}_s)-(1-\lambda)\nu'_s(\alpha+\beta \nu'_s+\beta N \bar{\nu}_s)
    \leq - \nu^\lambda_s \big( \alpha+\beta \nu^\lambda_s + \beta N \bar{\nu}_s \big).
\end{align}
Denoting by $f$ the running reward in the definition of the value function $V$ in \eqref{value}, namely
\begin{align}\label{running_reward}
    f(x_s, \nu_s) \coloneq \big( h P(x_s + N\bar{x}_s)-c \big) x_s - \nu_s \big( \alpha+\beta(\nu_s+N\bar{\nu}_s) \big),
\end{align}
we obtain, by summing \eqref{ineq1} and \eqref{ineq2}, the following inequality, 
\begin{align*}
    \lambda f(x_s, \nu_s) + (1-\lambda) f(x^\prime_s, \nu^\prime_s)
    \leq f(x^\lambda_s, \nu^\lambda_s),
\end{align*}
for all $s \in [t,T]$. Multiplying by $\erm^{-r(s-t)}$ and integrating from $t$ to $T$, we deduce
\begin{align*}
    \lambda \int_t^T \erm^{-r(s-t)}  f(x_s, \nu_s) \drm s
    + (1-\lambda) \int_t^T \erm^{-r(s-t)}  f(x^\prime_s, \nu^\prime_s) \drm s
    \leq \int_t^T \erm^{-r(s-t)} f(x^\lambda_s, \nu^\lambda_s) \drm s.
\end{align*}
The RHS of the previous inequality is upper bounded by $V(t,x_t^\lambda) = V(t,\lambda x_t+(1-\lambda)x'_t)$. Finally, taking the supremum over the arbitrary controls $\nu$ and $\nu^\prime$, we deduce
\begin{align*}
    \lambda V(t,x_t)+(1-\lambda)V(t,x'_t) \leq V(t,\lambda x_t+(1-\lambda)x'_t), \quad t \in [0,T].
\end{align*}
Since \(x_t\) and \(x'_t\) are arbitrary, this completes the proof that the value function \(V\) is concave.

The fact that $\nu^\star(t,x)$ is decreasing in $x$ is a direct consequence of the concavity of $V$. Indeed, since $V$ is concave, $\frac{\partial V}{ \partial x}$ is decreasing in $x$, implying that $\nu^\star(t,x)=\frac{1}{2\beta}\big( \frac{\partial V}{\partial x}-\alpha-\beta N\bar{\nu}_t \big)^+$ is non-increasing in $x$.
\end{proof}

In the following, we will work under the standing assumption stated in the previous lemma, namely that the mapping \(x \longmapsto x\,P(x + N\bar{x})\) is concave. This assumption is 
satisfied in 
the examples of price functions we will consider, in particular the linear and inverse cases, discussed in \Cref{linear_section,inverse_price} respectively.

\begin{lemma}\label{max}
Suppose that $\lim_{y\rightarrow\infty}P(y)< c$. Then, there exists $x_{\rm max}<\infty$ such that, for all $t \in [0,T]$ and for any process $x$ with dynamics defined by \eqref{eq:dynamic_x_heterogeneous} and starting from $x_t \geq x_{\rm max} \erm^{-\delta t}$, the associated optimal control is $\nu^\star(s,x_s)=0$ for all $s\in[t,T]$.
\end{lemma}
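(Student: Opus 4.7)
The plan is to prove the lemma by a direct variational comparison, showing that for $x_t$ above the threshold $x_{\max}\erm^{-\delta t}$, the null control $\nu\equiv 0$ on $[t,T]$ already dominates every admissible strategy. The key observation is that the instantaneous revenue $R_s(x):=h\bigl(P(x+N\bar{x}_s)-c\bigr)x$ is concave in $x$ (this is the standing concavity assumption underlying \Cref{V_concave}) and, thanks to $\lim_{y\to\infty}P(y)<c$, its marginal becomes non-positive for all $x$ above some threshold $y^\star$, uniformly in $s$. Beyond this threshold, adding capacity both depresses revenue and triggers a non-negative installation cost, so investing nothing must be optimal.

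Concretely, I would first set $y^\star:=\inf\{y\geq 0:P(y)\leq c\}$, which is finite by the assumption on $P$ combined with its strict monotonicity. Differentiating gives $R_s'(x)=h[P(x+N\bar{x}_s)-c]+hxP'(x+N\bar{x}_s)$, and since $\bar{x}_s\geq 0$ and $P'\leq 0$, one obtains $R_s'(x)\leq h(P(x)-c)\leq 0$ whenever $x\geq y^\star$, uniformly in $s\in[0,T]$ and in the mean-field trajectory $\bar{x}_s$. I would then take $x_{\max}:=y^\star\erm^{\delta T}$, chosen precisely so that if $x_t\geq x_{\max}\erm^{-\delta t}=y^\star\erm^{\delta(T-t)}$, the no-investment trajectory $x_s^0:=x_t\erm^{-\delta(s-t)}$ satisfies $x_s^0\geq y^\star\erm^{\delta(T-s)}\geq y^\star$ for every $s\in[t,T]$: the factor $\erm^{\delta T}$ in $x_{\max}$ absorbs the worst-case passive decay over the horizon.

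For any admissible $\nu\geq 0$ on $[t,T]$, the linearity of \eqref{eq:dynamic_x_heterogeneous} yields $x_s^\nu\geq x_s^0\geq y^\star$, so by concavity of $R_s$ and $R_s'(x_s^0)\leq 0$,
\begin{equation*}
R_s(x_s^\nu)-R_s(x_s^0)\;\leq\;R_s'(x_s^0)\bigl(x_s^\nu-x_s^0\bigr)\;\leq\;0.
\end{equation*}
Integrating against $\erm^{-r(s-t)}$ on $[t,T]$ and noting that the installation cost $\nu_s(\alpha+\beta(\nu_s+N\bar{\nu}_s))$ is non-negative and vanishes for $\nu\equiv 0$, one obtains $J(\nu)\leq J(0)$ for every admissible $\nu$; hence zero investment attains the supremum in \eqref{value}, and $\nu^\star(s,x_s)=0$ for all $s\in[t,T]$.

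I expect the only delicate step to be the calibration of $x_{\max}$: the factor $\erm^{\delta T}$ must be chosen so that the hypothesis $x_t\geq x_{\max}\erm^{-\delta t}$ exactly matches the requirement $x_s^0\geq y^\star$ at the worst-case time $s=T$. Once this bookkeeping is right, the remainder is a one-line concavity argument requiring neither the HJB--FP system \eqref{HJB}--\eqref{FP} nor any regularity beyond that already invoked for \Cref{V_concave}; the assumption $\lim_{y\to\infty}P(y)<c$ enters only to guarantee finiteness of $y^\star$.
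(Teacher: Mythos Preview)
Your proof is correct and follows essentially the same route as the paper: define $x_{\max}=y^\star\erm^{\delta T}$ so that the uncontrolled trajectory stays above the threshold, then compare $J(\nu)$ with $J(0)$ pointwise in time using that more capacity only depresses revenue and incurs non-negative cost. The paper's argument is marginally more elementary in that it picks $q$ with $P(q)<c$ strictly, so the revenue term $h(P(x_s+N\bar{x}_s)-c)x_s$ is outright negative and the comparison $f(x^\circ_s,0)>f(x_s,\nu_s)$ follows from monotonicity of $P$ alone, without invoking concavity of $R_s$ or differentiability of $P$; your tangent-line step works but is not needed, since $R_s'(x)\leq 0$ on $[y^\star,\infty)$ already gives $R_s(x_s^\nu)\leq R_s(x_s^0)$ directly.
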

\begin{proof}
Since $\lim_{y\rightarrow\infty}P(y)< c$, there exists $q < \infty$ such that $P(q)<c$, and we can thus define $x_{\rm max} \coloneq q \erm^{\delta T}$. Let then $t \in[0,T]$. For any process $x$ starting from $x_t \geq x_{\rm{max}} \erm^{-\delta t}$ at time $t$, the non-negativity of both the capacities and the admissible control imply
\begin{align*}
    x_s + N\bar{x}_s \geq x_s \geq x_t \erm^{-\delta (s-t)} \geq x_{\rm{max}} \erm^{-\delta s} \geq x_{\rm max} \erm^{- \delta T} = q, \quad s \in [t,T],
\end{align*}
and since $P$ is decreasing with $P(q)<c$, we deduce that $P(x_s + N\bar{x}_s) - c < 0$ for all $s \in [t,T]$. Since the revenue at any time $s \in [t,T]$, namely $h ( P(x_s + N\bar{x}_s) - c ) x_s$, is negative, and the cost of installing new capacity is always non-negative, it is straightforward to conclude that no capacity should be installed on $[t,T]$. More precisely, let $x$ as above, driven by a non trivial control $\nu$, and starting from $x_t \geq x_{\rm max} \erm^{-\delta t}$, and let $x^\circ$ the corresponding uncontrolled process, \textit{i.e.} $x^\circ_s = x_t \erm^{-\delta (s-t)}$, $s \in[t,T]$. We clearly have $x_s \geq x^\circ_s = x_t \erm^{-\delta (s-t)} \geq q$, and by the previous remark on the negative revenue and non-negative cost, we deduce that 
\begin{align*}
    f (x^\circ_s, 0 ) > f (x_s, \nu_s), \quad \text{ for } \nu_s > 0, \quad s \in [t,T].
\end{align*}
recalling that $f$ is the running reward defined in \eqref{running_reward}. Multiplying by \(\exp\bigl(-r(s - t)\bigr)\) and integrating over \(s \in [t,T]\), it follows that any nonzero control yields a strictly smaller total reward than applying no control. Consequently, the optimal control is \(\nu \equiv 0\). 
\end{proof}
By the previous lemma, we have that at any time $t$, $\nu^{\star}{(t,x)}=0$ for $x$ sufficiently large, namely $x \geq x_{\rm max} \erm^{-\delta t}$. From Equation \eqref{nustar}, we deduce that $\frac{\partial V}{\partial x}\leq\alpha+\beta N \bar{\nu}_t$ for $x\geq x_{\rm max}$.
Since $V$ is concave, the mapping $x \longmapsto \frac{\partial V}{\partial x}$ is decreasing. As a result, we define for $t \in [0,T]$ and fixed mean installation rate $\bar{\nu}_t$,
\[
x^\star(t) =
\begin{cases}
\min \left\{ x \geq0 \;\middle|\; \frac{\partial V}{\partial x}(t, x) = \alpha + \beta N \bar{\nu}_t \right\}, & \text{if } \frac{\partial V}{\partial x}(t, 0) \geq \alpha + \beta N \bar{\nu}_t \\
0, & \text{if } \frac{\partial V}{\partial x}(t, 0) < \alpha + \beta N \bar{\nu}_t.
\end{cases}
\]
The previous quantity \(x^\star(t)\) defines a threshold capacity: producers with installed capacity \(x < x^\star(t)\) at time~\(t\) will invest to increase their capacity, whereas producers with capacity already exceeding this threshold will not invest. In addition, since $\nu^{\star}(T,x)=0$, we deduce $x^\star(T)=0$, and we can define $T^\star=\inf \{t \in [0,T] : x^\star(s)=0, \; \forall s \in [t,T] \}$,
which is the analogue of $T^\star$ in \Cref{T^* lemma} for the homogeneous case. 
Note that at $T^\star$, since $x^\star(T^\star) = 0$, no producer should install new capacity, \textit{i.e.} $\bar{\nu}_{T^\star}=0$. Using this in the definition of $x^\star$ above, we deduce $\frac{\partial V}{\partial x} (T^{\star},0)=\alpha$. Since $\frac{\partial V}{\partial x} (T,0)=0$, we conclude that $T^\star < T$. Note that after $T^\star$, $x^\star$ will stay at $0$, meaning that no producer will install new capacity for the remaining time. This is illustrated in \Cref{x_star_fig} and confirmed by the numerical results in \Cref{linear_section}.
 
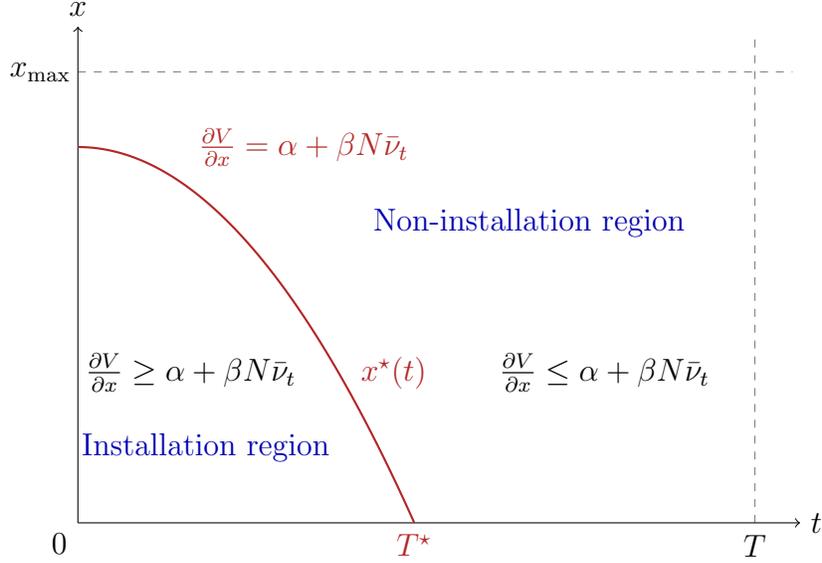
\begin{figure}[ht]
\centering
\begin{tikzpicture}

  \def\xmax{9.5}
  \def\ymax{6.5}
   
  \draw[dashed, gray] (9,0) -- (9,\ymax);

  \draw[dashed, gray] (0,6) -- (\xmax,6);
 
  \draw[->] (0,0) -- (\xmax+0.1,0) node[right] {$t$};
  \draw[->] (0,0) -- (0,\ymax+0.1) node[above] {$x$};

  \node at (9,-0.3) {$T$};
  \node at (-0.5,6) {$x_{\max}$};

  \draw[thick, red, domain=0:4.472135955, samples=100] 
    plot (\x, {max(5 - 0.25*(\x)^2, 0)}) node[below] {$T^\star$};

  \node[align=center] at (7, 2) {$\frac{\partial V}{\partial x}\leq\alpha+\beta N\bar{\nu}_t$};
  \node[align=center] at (1.5, 2) {$\frac{\partial V}{\partial x}\geq\alpha+\beta N\bar{\nu}_t$};
  \node[align=center,red] at (3, 5) {$\frac{\partial V}{\partial x}=\alpha+\beta N\bar{\nu}_t$};
  \node[align=center,red] at (4.2,2){$x^\star(t)$};
  
  \node[blue] at (1.7, 1) {Installation region};
  \node[blue] at (6,4) {Non-installation region};
  \node at (0,0)[below left] {$0$};
\end{tikzpicture}
\caption{Installation and non-installation regions}
\label{x_star_fig}
\end{figure}

In the following proposition, we provide a semi-explicit formula for the value function in the non-installation region, as illustrated in \Cref{x_star_fig}. In particular, this result shows that if a producer does not install new capacity at some time \(t \in [0,T]\), they will not invest at any later time \(s \in [t,T]\). In other words, once a producer's capacity belongs to the non-installation region, it will remain in that region.
\begin{proposition}\label{proposition_V}
In the non-installation region $\{(t,x):x\geq x^\star(t)\}$, $V(t,x)$ is given by 
\begin{equation}\label{integral_V}
    V(t,x)= x\int_{t}^{T} \erm^{-(r+\delta)(s-t)}h(P(N\bar{x}_s+x \erm^{-\delta(s-t)})-c)\, \drm s.
\end{equation}
\end{proposition}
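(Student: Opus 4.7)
The plan is to exploit the fact that, in the non-installation region, the HJB equation \eqref{HJB} reduces to a linear first-order PDE that can be solved explicitly by the method of characteristics.

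First, observe that in the region $\{(t,x):x\geq x^\star(t)\}$, by the very definition of $x^\star(t)$, we have $\frac{\partial V}{\partial x}(t,x)\leq\alpha+\beta N\bar{\nu}_t$, so the squared term in \eqref{HJB} vanishes and the HJB simplifies to
\begin{equation*}
\frac{\partial V}{\partial t}-rV-\delta x\frac{\partial V}{\partial x}+h\big(P(x+N\bar{x}_t)-c\big)x=0.
\end{equation*}
The characteristic curves of this PDE are governed by $\dot{y}_s=-\delta y_s$, so starting from $y_t=x$ they are explicitly $y_s=x\erm^{-\delta(s-t)}$. Along such a characteristic, the chain rule combined with the simplified HJB yields the linear ODE
\begin{equation*}
\frac{\drm}{\drm s}V(s,y_s)=rV(s,y_s)-h\big(P(y_s+N\bar{x}_s)-c\big)y_s.
\end{equation*}

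Assuming the characteristic remains in the non-installation region, integrating this ODE backwards from $s=T$ to $s=t$ with the terminal condition $V(T,y_T)=0$ gives
\begin{equation*}
V(t,x)=\int_{t}^{T}\erm^{-r(s-t)}h\big(P(y_s+N\bar{x}_s)-c\big)y_s\,\drm s,
\end{equation*}
and substituting $y_s=x\erm^{-\delta(s-t)}$ produces the claimed formula \eqref{integral_V}.

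The main obstacle is therefore to justify that the characteristic $y_s=x\erm^{-\delta(s-t)}$ starting from a point $(t,x)$ with $x\geq x^\star(t)$ remains in the non-installation region for all $s\in[t,T]$, i.e., $y_s\geq x^\star(s)$ throughout. My plan is to combine the concavity of $x\longmapsto V(t,x)$ established in \Cref{V_concave} (which makes $\frac{\partial V}{\partial x}$ non-increasing in $x$) with the boundary relation $\frac{\partial V}{\partial x}(s,x^\star(s))=\alpha+\beta N\bar{\nu}_s$, and proceed by contradiction: if the characteristic entered the installation region at a first crossing time $\tau\in(t,T]$, then analyzing the sign of $\frac{\drm}{\drm s}(y_s-x^\star(s))$ at $s=\tau$ via the simplified HJB would contradict $\tau$ being an entry time. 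A cleaner alternative is a direct verification argument: denote by $W(t,x)$ the right-hand side of \eqref{integral_V}; by construction, $W$ is the payoff achieved by the admissible control $\nu\equiv 0$, so $V\geq W$ pointwise; a routine differentiation shows that $W$ satisfies the simplified HJB in the non-installation region and matches the terminal condition $W(T,x)=0$, and standard verification using the concavity of $V$ then yields $V\leq W$ in this region, hence equality.
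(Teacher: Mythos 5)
Your derivation takes essentially the same route as the paper: in the non-installation region the positive part in \eqref{HJB} vanishes, the equation becomes a linear transport PDE, and \eqref{integral_V} is obtained by integrating along the characteristics $y_s=x\erm^{-\delta(s-t)}$ from the terminal condition $V(T,\cdot)=0$. The paper's proof is exactly this, phrased as a verification that the candidate solves the linear PDE and coincides with the payoff of the zero-installation strategy.

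Where you go beyond the paper is in flagging that the argument needs the characteristic to remain in the region $\{x\geq x^\star(s)\}$ for all $s\in[t,T]$; otherwise the linear PDE does not hold along the whole path and the terminal data alone do not determine $V(t,x)$. The paper silently assumes this (the surrounding text even presents the invariance of the non-installation region as a \emph{consequence} of the proposition), so you have correctly located the weak point. However, neither of your proposed fixes closes it as stated. The contradiction argument requires a sign for $\frac{\drm}{\drm s}\bigl(y_s-x^\star(s)\bigr)$ at the first crossing time, hence control on $\dot{x}^\star$ that is not available; note that the decay of $y_s$ pushes the producer \emph{toward} the installation region, since by \Cref{V_concave} smaller producers install more, so the invariance is genuinely a race between the decay rate $\delta$ and the rate at which $x^\star(\cdot)$ shrinks. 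The verification alternative is circular: $V\geq W$ is immediate, but $V\leq W$ requires $W$ to be a supersolution of the full HJB along the uncontrolled trajectory, i.e. $\frac{\partial W}{\partial x}\bigl(s,x\erm^{-\delta(s-t)}\bigr)\leq\alpha+\beta N\bar{\nu}_s$ for all $s\in[t,T]$, which (since $W$ is supposed to equal $V$ there) is precisely the invariance statement in disguise; a first-order computation of the Gateaux derivative of the payoff at $\nu\equiv0$ shows that optimality of no installation from $(t,x)$ is in fact \emph{equivalent} to the characteristic staying in the region. So your write-up is no less complete than the paper's, but the extra step you add is a genuine open seam rather than the routine verification you describe.
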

\begin{proof}
First, in the non-installation region, the HJB equation \eqref{HJB} simplifies to
\begin{align*}
\frac{\partial V}{\partial t}-rV-\delta x\frac{\partial V}{\partial x}+h(P(x+N\bar{x}_t)-c)x=0,
\end{align*}
with terminal condition $V(T,x)=0$, which is a linear PDE for which existence and uniqueness holds. It is then straightforward to see that the candidate solution in \eqref{integral_V} indeed satisfies the above PDE. 
Clearly, $V$ is also the value obtained under the trajectory $x_s=x_t \erm^{-\delta(s-t)}$, which coincides with the strategy of no installation between $t$ and $T$. 
\end{proof}

\subsection{Linear price function}\label{linear_section}
In this section, we revisit the case of a linear price function $P(x)=d_1-d_2x$ with $d_1,d_2>0$, previously studied in \Cref{limhomogeneous} for the homogeneous case.
\begin{corollary}\label{cor:V_linear}
In the non-installation region $\{(t,x):x\geq x^\star(t)\}$, the value function is given by $V(t,x)=a_tx^2+b_tx$ with 
\begin{align}\label{a_eq}
    a_t &= -\frac{h d_2}{r + 2\delta} \left( 1 -  \erm^{-(r + 2\delta)(T - t)} \right), \quad 
b_t = h \int_{t}^{T} \erm^{-(r+\delta)(s-t)} (d_1-c-d_2N\bar{x}_s)\, \drm s
\end{align}
and the threshold curve is
\begin{equation}\label{x_star}
x^\star(t)=\max \left( \frac{\alpha+\beta N \bar{\nu}_t-b_t}{2a_t},0\right),\quad\text{ for }t<T.
\end{equation}
\end{corollary}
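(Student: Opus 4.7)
The proof is essentially a direct computation starting from the semi-explicit formula \eqref{integral_V} provided by \Cref{proposition_V}, specialized to the linear price $P(y)=d_1-d_2 y$. My plan is to substitute this price into \eqref{integral_V}, separate the integrand into terms independent of $x$ and terms proportional to $x$, and identify the coefficients $a_t$ and $b_t$.

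More precisely, in the non-installation region, \Cref{proposition_V} yields
\begin{align*}
V(t,x) = x\int_{t}^{T} \erm^{-(r+\delta)(s-t)} h \big(d_1 - c - d_2 N\bar{x}_s - d_2 x\erm^{-\delta(s-t)}\big)\, \drm s.
\end{align*}
Splitting the integral into the $x$-independent part, which yields the linear coefficient $b_t$, and the part proportional to $x\erm^{-\delta(s-t)}$, which after multiplying by the leading $x$ gives the quadratic coefficient, one directly obtains $V(t,x) = a_t x^2 + b_t x$ with $b_t$ as claimed and
\begin{align*}
a_t = -h d_2 \int_t^T \erm^{-(r+2\delta)(s-t)}\, \drm s = -\frac{h d_2}{r+2\delta}\big(1 - \erm^{-(r+2\delta)(T-t)}\big),
\end{align*}
which matches \eqref{a_eq}.

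For the threshold \(x^\star(t)\), I would start from its definition just before \Cref{proposition_V}: it is the smallest $x \geq 0$ (if any) at which $\frac{\partial V}{\partial x}(t,x) = \alpha + \beta N \bar{\nu}_t$. Since $V$ is of class at least $C^1$ in $x$ by \Cref{V_concave} and the quadratic form holds on the closed region $\{x\geq x^\star(t)\}$, we can differentiate and evaluate at the boundary to get $\frac{\partial V}{\partial x}(t, x^\star(t)) = 2a_t x^\star(t) + b_t$. Setting this equal to $\alpha + \beta N \bar{\nu}_t$ and solving gives the candidate $x = (\alpha + \beta N \bar{\nu}_t - b_t)/(2a_t)$; the $\max$ with $0$ in \eqref{x_star} then handles the case where $\frac{\partial V}{\partial x}(t,0)<\alpha+\beta N\bar{\nu}_t$, in accordance with the piecewise definition of $x^\star$.

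There is no serious obstacle here: the main computational content is the elementary integration of $\erm^{-(r+2\delta)(s-t)}$. The one point worth being careful about is that $a_t$ is strictly negative for $t<T$ (since $h,d_2>0$), which both guarantees that the denominator $2a_t$ in \eqref{x_star} is non-zero and, together with \Cref{V_concave}, is consistent with the concavity of $x \longmapsto V(t,x)$ on the non-installation region. A minor secondary check is that the semi-explicit formula from \Cref{proposition_V}, nominally valid for $x\geq x^\star(t)$, extends continuously to $x=x^\star(t)$, so that the derivative condition used to define $x^\star(t)$ can indeed be evaluated using the quadratic expression.
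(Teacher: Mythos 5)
Your proof is correct and follows essentially the same route as the paper: both rest on \Cref{proposition_V} specialized to the linear price, the only cosmetic difference being that you expand the integral representation \eqref{integral_V} directly to read off $a_t$ and $b_t$, whereas the paper substitutes the quadratic ansatz into the linear PDE of the non-installation region and solves the resulting ODEs $\dot{a}_t=(r+2\delta)a_t+hd_2$, $\dot{b}_t=(r+\delta)b_t-h(d_1-c-d_2N\bar{x}_t)$ with $a_T=b_T=0$. Your derivation of the threshold from $2a_tx^\star(t)+b_t=\alpha+\beta N\bar{\nu}_t$, including the $\max$ with $0$ and the observation that $a_t<0$ for $t<T$, matches the paper's argument.
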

\begin{proof}
Under the linear price specification, we can make the linear quadratic ansatz $V(t,x)=a_tx^2+b_tx$. From \Cref{proposition_V}, we directly deduce that the pair $(a,b)$ is solution to the following system of ODE,
\begin{equation*}
\dot{a}_t-ra_t-2\delta a_t-hd_2 =0, \quad 
\dot{b}_t-rb_t-\delta b_t+h(d_1-c-d_2\bar{x}_tN)=0, \quad a_T=b_T=0,
\end{equation*} 
and thus given by \eqref{a_eq}. Finally, Equation \eqref{x_star} is derived from the fact that $V$ satisfies $\frac{\partial V}{\partial x} {(t,x^\star{(t)})}=\alpha+\beta N\bar{\nu}_t$ when $x^\star(t)>0$.
\end{proof}

From the previous result, note that $a_t$ is only a function of $t$, while $b_t$ is expressed as a function of the trajectory of the mean installed capacity, namely $\bar{x}_s$ for $s \in [t,T]$. Unfortunately, such explicit derivation is not possible in the installation region. 
More precisely, on $\{(t,x):x< x^\star(t)\}$, we have by definition that $\nu^\star>0$, and the HJB equation \eqref{HJB} satisfied by the value function $V$ becomes
\begin{equation}\label{new_hjb}
\frac{\partial V}{\partial t}-rV-\delta x\frac{\partial V}{\partial x}+h(d_1-c-d_2(x+\bar{x}_tN))x+\frac{1}{4\beta} \bigg(\dfrac{\partial V }{\partial x}-\alpha-\beta N\bar{\nu}_t \bigg)^2=0.
\end{equation}
However, to our knowledge, no explicit solution exists for the previous HJB equation when the boundary condition is determined by the solution in the non-installation region, given by \Cref{cor:V_linear}. Nevertheless, in the following, we suggest two numerical approaches to approximate the value function, the corresponding optimal strategy, and the solution to \eqref{FP} in that region. The first approach uses a quadratic ansatz for approximation, while the second employs a finite difference scheme.

\subsubsection{Linear-quadratic approximation }\label{two_piece}

For the first numerical approach, we approximate the solution to PDE \eqref{new_hjb} by a quadratic function. More precisely, using the ansatz $V(t,x)= A_tx^2+B_tx+C_t$ in \eqref{new_hjb}, we obtain that the functions $A,B,C$ should be solution to the system 
\begin{subequations}\label{ODE_installation}
\begin{align}
\dot{A}_t &= (r+2\delta)A_t + hd_2 - \frac{1}{\beta} A_t^2 \label{Aequation} \\
\dot{B}_t &= rB_t + \delta B_t - h(d_1-c-d_2\bar{x}_tN) - \frac{1}{\beta} A_t(B_t-\alpha-\beta N\bar{\nu}_t) \label{Bequation}\\
\dot{C}_t &= rC_t - \frac{1}{4\beta} \big( (B_t-\alpha-\beta N\bar\nu_t)^+ \big)^2.\label{Cequation}
\end{align} 
\end{subequations}
By \eqref{Aequation}, \(A\) solves a Riccati equation, and therefore it will be of the form
\[
{A_t = \beta  \frac{\lambda_1 + R_A \lambda_2  \erm^{(\lambda_2 - \lambda_1)t}}{1 + R_A  \erm^{(\lambda_2 - \lambda_1)t}}}
\]
with $\lambda_1>0>\lambda_2$ solutions to the quadratic equation $\lambda^2- \beta (r+2\delta)\lambda-h \beta d_2=0$ and $R_A$ a constant to be determined. Similarly, from \eqref{Bequation}, $B$ depends on a constant $R_B$, which is the value of $B$ at $T^\star$ and is similarly not known yet. We choose the above constants $R_A$ and $R_B$ so that \(A_{T^\star} = 0\), since concavity of \(V\) requires \(A \leq 0\), and \(B_{T^\star} = b_{T^\star}\), to ensure that \(\frac{\partial V}{\partial x}\) matches at the point \(\bigl(T^\star,0\bigr)\) with the solution in the non-installation region. Note that we do not impose the smooth pasting conditions along the entire threshold boundary \(x^\star(t)\), but only at \(T^\star\); this is precisely what makes the solution an approximation. In fact, if we attempted to match both $V$ and $\frac{\partial V}{\partial x}$ continuously along the entire threshold $x^*(t)$, we would arrive at a contradiction, which is the reason why this is not an exact solution but an approximation.
Since the optimal control depends only on \(\frac{\partial V}{\partial x}\), we do not need to need to compute the solution to \eqref{Cequation} for approximating the mean field equilibrium.

Using this quadratic approximation for \(V\), we first solve the Fokker–Planck equation~\eqref{FP} by finite differences, as described below, approximating \(\frac{\partial V}{\partial x}\) by \(2A_t x + B_t\) in the installation region. Next, we update the mean installation rate via the fixed-point equation~\eqref{nubarformula}, which becomes under the quadratic approximation
\[
\bar{\nu}_t = \frac{ \int_{0}^{x^\star(t)} \bigl(2A_t x + B_t - \alpha\bigr)\,m(t,x)\,\mathrm{d}x}{2\beta + \beta N \int_{0}^{x^\star(t)} m(t,x)\,\mathrm{d}x}, \quad t \in [0,T],
\]
and use this updated value to iterate the algorithm until convergence is achieved.

\paragraph{Summary of the algorithm.}\label{algorithm}

\begin{enumerate}\itemsep -2pt
    \item \textit{Initialization.} Start with an initial guess for $\bar{\nu}$, for example $\bar{\nu}_t=\bar{x}_0(1-t/T)$.
    \item \textit{Mean state dynamics.} Solve numerically $\dot{\bar{x}}_t = - \delta \bar{x}_t+\bar{\nu}_t$ starting from $\bar{x}_0 = x_0$ to obtain the average capacity $\bar{x}_t$.
    \item \textit{Non-installation region.} For all $t$, compute $b_t$ numerically from \eqref{a_eq}, and use it together with $a_t$ in \eqref{x_star} to determine the threshold $x^\star(t)$. Deduce then $T^\star$ by solving $x^\star(t)=0$.
    \item \textit{Installation region.} Use the quadratic ansatz introduced above where $A$, $B$, and $C$ are computed by backward integration from terminal conditions $A_{T^\star} = 0$, $B_{T^\star} = b_{T^\star}$ of the associated ODEs \eqref{ODE_installation}.
    \item \textit{Fokker–Planck equation.} Starting from the known repartition at time $0$, solve forward in time
    \[
    \frac{\partial m}{\partial t} + \frac{\partial}{\partial x} \left[ m\left( -\delta x + \frac{(2A_t x + B_t - \alpha - \beta N \bar{\nu}_t)^+}{2\beta} \right) \right] = 0,
    \]
    which corresponds to the FP equation in \eqref{FP}, to obtain the density $m$. Here we use an explicit Euler finite differences scheme with upwinding for the $x$ derivative. 
    \item \textit{Update and iterate.} Use the density computed at the previous step in \eqref{nubarformula} to update $\bar{\nu}$. Return to step 2 with the updated $\bar{\nu}$ and iterate until convergence.
\end{enumerate}

We present a numerical example in which we use the same parameter values as in \Cref{numerics}, see \eqref{eq:param_numerics}, together with $d_1=500$\$/MWh, $d_2=10^{-2}$\$/MW$^2$h specifying the linear price function. For the initial distribution of installed capacities, we consider a truncated exponential distribution. More precisely, we define \(N\) 
discrete capacity levels \(x_i = i \cdot \Delta x\) for \(i = 1,\dots,N\), where \(\Delta x = x_{\text{end}}/N\) and \(x_{\text{end}}\) denotes a maximum capacity.  We assign exponential weights of the form $w_i = \erm^{-N x_i / X_0}$, which reflect the fact that smaller producers are more common in renewable electricity markets. We then normalize and rescale these weights to define each $x_0^{(i)}$ so that the total initial capacity equals \(X_0 = 30\,\text{GW}\).
This yields a discrete approximation of a truncated exponential distribution over the grid \(\{x_i\}\), with mass concentrated near small capacities and a long tail toward larger producers.

\begin{figure}[h!]
    \centering
    
    \begin{subfigure}[b]{0.30\textwidth}
        \centering
        \includegraphics[width=\textwidth]{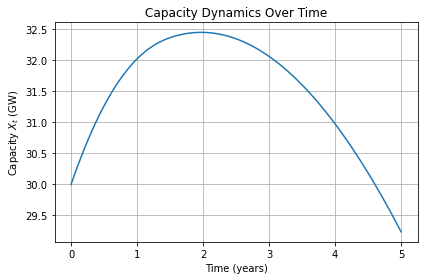}
        \caption{\footnotesize $T=5$ years}
        \label{fig:verification1}
    \end{subfigure}
    \begin{subfigure}[b]{0.30\textwidth}
        \centering
        \includegraphics[width=\textwidth]{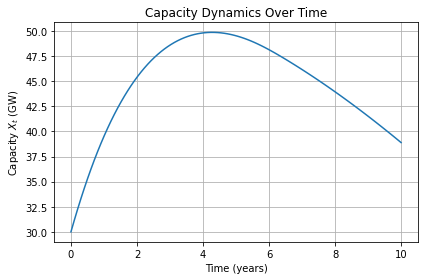}
        \caption{$T=10$ years}
        \label{fig:verification2}
    \end{subfigure}
    \hfill
    \caption{Capacity dynamics with linear price function for different time horizons}
\end{figure}

The resulting capacity trajectories are presented in \Cref{fig:verification1,fig:verification2}, for horizons \(T=5\) and \(T=10\) years, respectively. Based on the numerical computations, we identify the times at which all producers cease installing new capacity: \(T^\star \approx 1.71\) in the first case ($T=5$) and \(T^\star \approx 6.26\) in the second ($T=10$). Beyond these thresholds, no capacity is installed, and the total capacity declines at the natural rate \(\delta\). 

\begin{figure}[ht]
    \centering
    
    \begin{subfigure}[b]{0.32\textwidth}
        \centering
        \includegraphics[width=0.99\textwidth]{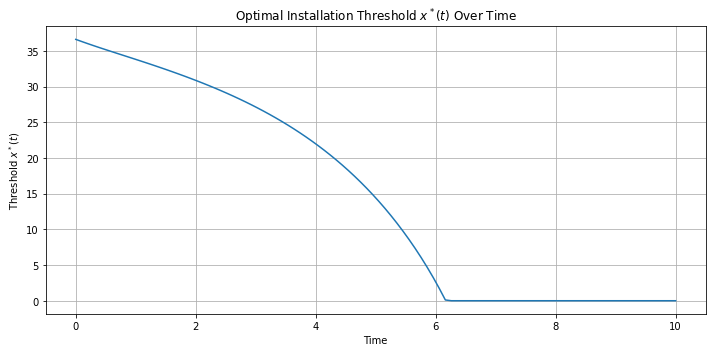}
        \caption{\footnotesize Threshold \(x^\star(t)\)}
        \label{fig:verification30}
    \end{subfigure}
    \hfill
    \begin{subfigure}[b]{0.32\textwidth}
        \centering
        \includegraphics[width=\textwidth]{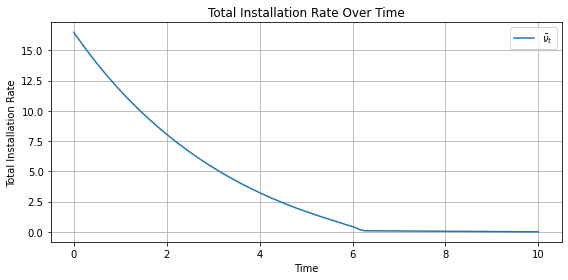}
        \caption{\footnotesize Total installation rate ${K}_t$}
        \label{fig:verification3}
    \end{subfigure}
    \hfill
    \begin{subfigure}[b]{0.32\textwidth}
        \centering
        \includegraphics[width=\textwidth]{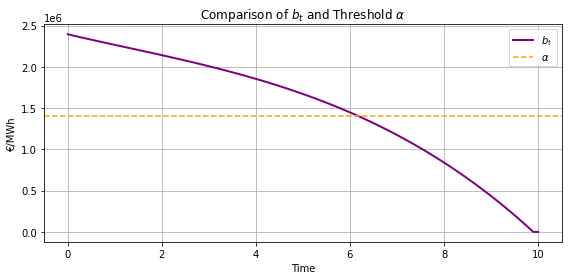}
        \caption{\footnotesize Comparison of $b_t$ and $\alpha$}
        \label{fig:verification4}
    \end{subfigure}
    \caption{Threshold between regions and installation rate for $T=10$ years}
\end{figure}

For the second case $T=10$, we also graphically represent in \Cref{fig:verification30} the threshold \(x^\star(t)\) between the installation and non-installation regions, and observe that its behavior is similar to the illustration in \Cref{x_star_fig}. As shown in \Cref{fig:verification3}, the mean installation rate drops to zero for \(t \geq T^\star\), confirming that no capacity is installed beyond this time. Finally, still considering \(T=10\), we also plot in \Cref{fig:verification4} the function $b$ given by \eqref{a_eq} in \Cref{cor:V_linear}. We observe that \(b_{T^\star} \approx \alpha\), as expected. Indeed, this follows from the fact that \(x^\star(T^\star)=0\), $\bar \nu_{T^\star} = 0$ and the characterization of the threshold curve in \eqref{x_star}.

\subsubsection{Finite differences method}\label{central}
The other method relies on a classical finite difference scheme. Specifically, we discretize the HJB equation~\eqref{HJB} backward in time on a uniform grid over both time and capacity. At each time step, the spatial derivatives \(\frac{\partial V}{\partial x}\) are approximated using central differences, which leads to a nonlinear system of equations for the unknown values \(V_n\) at the current time layer. This system is then solved using a root-finding algorithm (\textit{e.g.}, \texttt{fsolve}).

More precisely, given a uniform grid in time \(t_n\) and space \(x_j\) with step sizes \(\Delta t\) and \(\Delta x\), we approximate:
\[
\frac{\partial V}{\partial t} (t_n, x_j) \approx \frac{V^{n+1}_j - V^{n}_j}{\Delta t},
\quad
\frac{\partial V}{\partial x} (t_n, x_j) \approx \frac{V^n_{j+1} - V^n_{j-1}}{2\Delta x}.
\]
Accordingly, the discretized HJB equation at each grid point \((t_n, x_j)\) takes the form:
\begin{align*}
\frac{V^{n+1}_j - V^{n}_j}{\Delta t}
- r V^n_j
- \delta x_j \frac{V^n_{j+1} - V^n_{j-1}}{2\Delta x}
&+ (P(N \bar{x}_n + x_j) - c) hx_j\\
&+ \frac{1}{4\beta} \left[\left( \frac{V^n_{j+1} - V^n_{j-1}}{2\Delta x} - \alpha - \beta N \bar{\nu}_n \right)^+\right]^2
= 0.
\end{align*}
At each time step, the system is solved backward in time, using \(V_{n+1}\) to compute \(V_n\). We further impose the following boundary conditions at the boundaries of the domain:
\begin{itemize}\itemsep -2pt
\item[\scriptsize$\bullet$] at $x=0$, we match the value to the solution $C_{t_n}$ obtained by \eqref{Cequation};
\item[\scriptsize$\bullet$] at $x=0$, we match the value to the solution $C_{t_n}$ obtained by \eqref{Cequation};
\item[\scriptsize$\bullet$] at $x=x_{\rm max}$, we impose $V_n(x_{\max})=a_{t_n}x_{\max}^2+b_{t_n}x_{\max}$;
\item[\scriptsize$\bullet$] at $T=0$, we impose $V_{T}^{j}=0$.
\end{itemize}

The results are illustrated in \Cref{fig:summary} for the following parameters:
\begin{equation}
\begin{split}\label{model_param_2}
x_0 &=0.1\text{MW},\quad c=1\$/\text{MWh}, \quad r=0.05\text{year}^{-1},\quad
\delta=\frac{\log2}{10}\text{year}^{-1}, \quad N=10, \\
h&=1\frac{\text{hours}}{\text{year}},\quad \alpha=0.1\$/\text{MW},\quad\frac{1}{2\beta}=5\text{MW}^2/(\$\text{year}),\quad T=1 \text{ years.}
\end{split}
\end{equation}
The linear price function is further characterized by $d_1=2\$/\text{MWh}$ and $d_2= 1\$/\text{MW}h$. 

\Cref{mean_1,mean_2} display the trajectories of the mean capacity and the optimal control over time. For comparison, we also solve the system numerically using the alternative method that approximates \(V\) by two quadratic functions. As shown in \Cref{mean_3,mean_4}, this approach produces results that are very similar to those obtained with the finite difference scheme. This agreement verifies that both numerical methods are accurate and consistent with each other. 

In addition, \Cref{distribution_1,distribution_2} show the distribution of capacities at \(t=0\) and \(t=1\). As intended, the initial distribution is exponential, and over time it becomes more concentrated around higher capacities, since producers with smaller initial capacities tend to invest more rapidly. 

\begin{figure}[ht]
    \centering
    
    \begin{subfigure}[b]{0.32\textwidth}
        \centering
        \includegraphics[width=\textwidth]{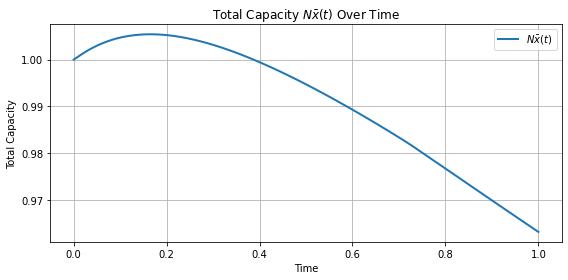}
        \caption{\footnotesize Total capacity $X$}
        \label{mean_1}
    \end{subfigure}
    \hfill
    \begin{subfigure}[b]{0.32\textwidth}
        \centering
        \includegraphics[width=\textwidth]{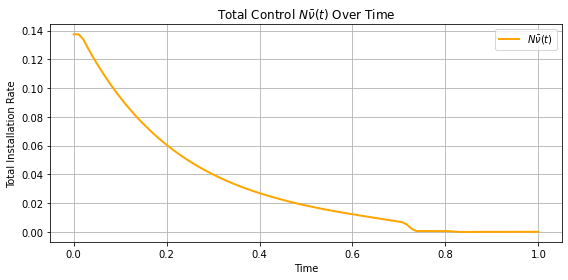}
        \caption{\footnotesize Total installation rate $K$}
        \label{mean_2}
    \end{subfigure}
    \hfill
    \begin{subfigure}[b]{0.32\textwidth}
        \centering
        \includegraphics[width=\textwidth]{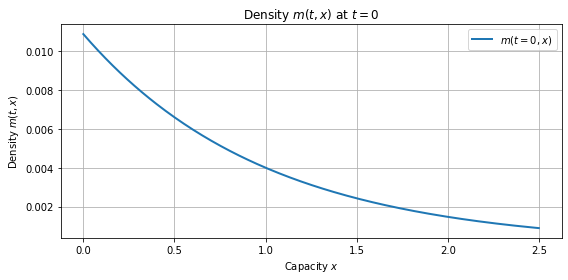}
        \caption{\footnotesize Density $m(t=0,x)$}
        \label{distribution_1}
    \end{subfigure}

    \vspace{1em}

    \begin{subfigure}[b]{0.32\textwidth}
        \centering
        \includegraphics[width=\textwidth]{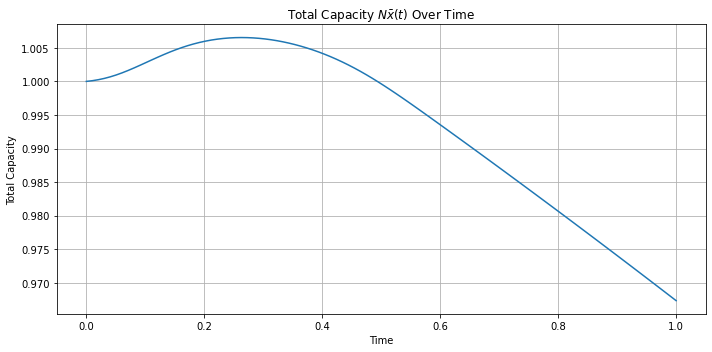}
        \caption{\footnotesize $X$ by quadratic approach}
        \label{mean_3}
    \end{subfigure}
    \hfill
    \begin{subfigure}[b]{0.32\textwidth}
        \centering
        \includegraphics[width=\textwidth]{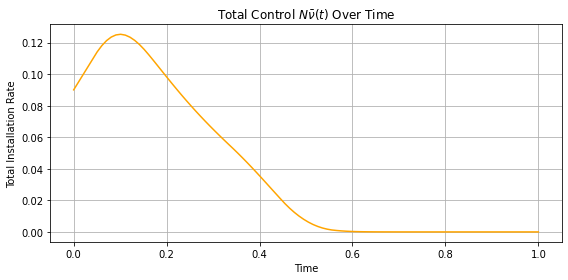}
        \caption{\footnotesize $K$ by quadratic approach}
        \label{mean_4}
    \end{subfigure}
    \hfill
    \begin{subfigure}[b]{0.32\textwidth}
        \centering
        \includegraphics[width=\textwidth]{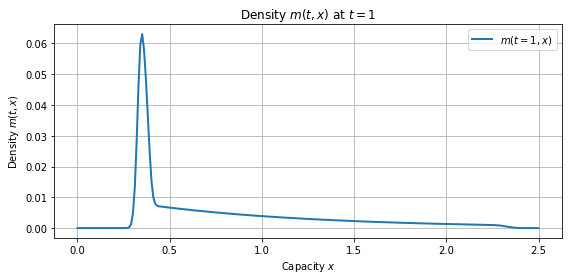}
        \caption{\footnotesize Density $m(t=1,x)$}
        \label{distribution_2}
    \end{subfigure}
    \caption{Simulation results with linear price and parameter values \eqref{model_param_2}}
    \label{fig:summary}
\end{figure}

\subsection{Inverse price function }\label{inverse_price}
We conclude this section by returning to the inverse price case, examined in \cite{alasseur2023mean} and previously in \Cref{numerics} for the homogeneous case. The main theoretical result presented below directly follows from \Cref{proposition_V}, using the specification $P(x) = p/x$ for the price function. 

\begin{corollary}
In the non-installation region $\{(t,x):x\geq x^\star(t)\}$, $V(t,x)$ is given by 
\[
V(t,x) = h x \left( p \int_t^T \frac{ \erm^{-(r+\delta)(s-t)}}{N \bar{x}_s + x  \erm^{-\delta(s - t)}}\,  \drm s - \dfrac{c}{r + \delta} \big( 1 -  \erm^{-(r + \delta)(T - t)} \big) \right).
\]
\end{corollary}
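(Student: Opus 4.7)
The plan is to apply \Cref{proposition_V} directly, substituting the specific choice $P(y)=p/y$ into the integral representation of $V$ in the non-installation region, namely
\[
V(t,x) = x\int_t^T \erm^{-(r+\delta)(s-t)} h\big(P(N\bar{x}_s + x\erm^{-\delta(s-t)}) - c\big)\, \drm s.
\]
I would split the integrand into two pieces along the price term and the marginal cost term. The $P$-contribution becomes $hxp\int_t^T \erm^{-(r+\delta)(s-t)}/(N\bar{x}_s + x\erm^{-\delta(s-t)})\, \drm s$, which is exactly the first term in the claimed formula. The $c$-contribution is trajectory-independent and integrates in closed form, giving
\[
-hxc\int_t^T \erm^{-(r+\delta)(s-t)}\, \drm s = -\frac{hxc}{r+\delta}\big(1 - \erm^{-(r+\delta)(T-t)}\big).
\]
Factoring $hx$ out of both pieces yields exactly the expression stated in the corollary.

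The only point that merits a brief remark is well-posedness of the first integral, since $P(y)=p/y$ is singular at $y=0$. However, on the reachable set for the non-installation trajectory one has $x\erm^{-\delta(s-t)} \geq x\erm^{-\delta T}$, and the mean $\bar{x}_s$ remains bounded below by $\bar{x}_0 \erm^{-\delta s}$ since installation rates are non-negative. Consequently, as long as either $x>0$ or $\bar{x}_0>0$, the denominator $N\bar{x}_s + x\erm^{-\delta(s-t)}$ stays bounded away from zero on $[t,T]$, so the integrand is continuous and the integral is well-defined. There is no substantive obstacle beyond this direct substitution; the content of the corollary is entirely inherited from \Cref{proposition_V}.
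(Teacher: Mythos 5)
Your proposal is correct and follows exactly the paper's route: the paper states that the corollary ``directly follows from \Cref{proposition_V}, using the specification $P(x)=p/x$,'' which is precisely your substitution, splitting of the integrand, and closed-form evaluation of the $c$-term. Your additional remark on well-posedness of the singular integrand is a harmless (and reasonable) supplement that the paper handles elsewhere via the lower bound on reachable capacities.
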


Therefore, in the non-installation region, we can further compute
\begin{align*}
\frac{\partial V}{\partial x}
&=h\left( p \int_t^T \frac{ \erm^{-(r+2\delta)(s-t)}N\bar{x}_s}{(N \bar{x}_s + x  \erm^{-\delta(s - t)})^2}\,  \drm s - \dfrac{c}{r + \delta} \big( 1 -  \erm^{-(r + \delta)(T - t)} \big) \right)
\end{align*}
which is strictly decreasing in $x$, confirming that $V$ is indeed concave. Then, \(x^\star(t)\) should be determined as the solution, if it exists, to $\frac{\partial V}{\partial x} (t, x^\star(t)) = \alpha + \beta N\,\bar{\nu}_t$, $t \in [0,T]$.
However, contrary to the results for the linear price case in \Cref{cor:V_linear}, no explicit analytic expression for the threshold function can be derived here. Nevertheless, $x^\star(t)$ can be approximated numerically. Moreover, since there is no obvious ansatz to approximate the solution to the HJB equation in the installation region, we focus here on the second numerical method, based on finite differences. Numerical results are presented for the same parameter values used in the previous section, as specified in \Cref{model_param_2}, but with \(p = 2\,\$/\text{h}\) to characterize the inverse price function. This value of $p$ is chosen so that the inverse price $p/x$ is of the same order as the linear price $d_1 - d_2 x$ for $x$ near \(X_0\), the initial total capacity, ensuring that the results remain comparable in terms of dynamics and price levels. The trajectories of the mean capacity and the mean installation rate are shown in \Cref{finite_p/x,finite_p/x_1} respectively.

\begin{figure}[ht]
    \centering
    \begin{subfigure}{0.4\textwidth}
        \centering
        \includegraphics[width=\textwidth]{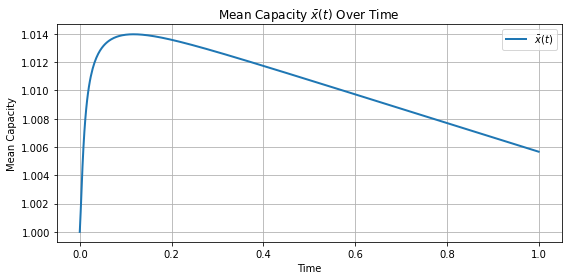}
        \caption{\footnotesize Total capacity $X(t)$}
        \label{finite_p/x}
    \end{subfigure}
  \begin{subfigure}{0.4\textwidth}
      \centering
        \includegraphics[width=\textwidth]{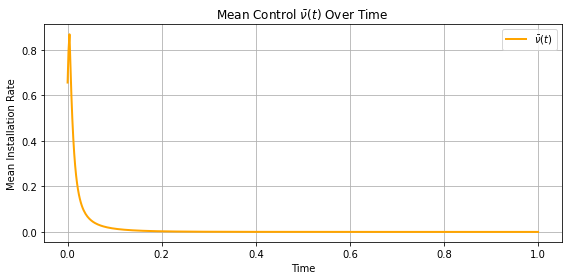}
        \caption{\footnotesize Total installation rate $K_t$}
       \label{finite_p/x_1}
  \end{subfigure}
  \caption{Simulation results with inverse price and parameter values \eqref{model_param_2}}
    \label{fig:summary_inverse}
\end{figure}

The simulation shows that the inverse price function \( P(x) = \frac{p}{x} \) leads to a strong incentive for early installation: when aggregate capacity \( x + N\bar{x}_t \) is low, the electricity price becomes very high, resulting in steep marginal rewards for producers. This creates a pronounced front-loading in installation behavior, as producers seek to take advantage of the initially high prices before crowding drives them down. Consequently, total production ramps up sharply in the early part of the horizon.
\section{Production intermittency}\label{sec:randomness}
In this section, we briefly study a version of the model with randomness. Specifically, suppose that \(x_t\) evolves according to
\[
\mathrm{d}x_t = \big(-\delta x_t + \nu_t\big) \mathrm{d}t + \sigma x_t \mathrm{d}W_t, \quad t \in [0,T],
\]
where $W$ is a Brownian Motion.
Introducing randomness in the installed capacity \(x_t\) indirectly generates volatility in the instantaneous profit \(h x_t \bigl(P(X_t)-c\bigr)\). In this sense, incorporating stochastic fluctuations in the state variable \(x_t\) could serve to model the intermittency of renewable energy sources—such as variations in solar or wind output—which leads to uncertain revenue streams. 
The variability from intermittency of renewables (see, for instance \citeayn{carmonayang}) increases the sensitivity of grids to fluctuations. Large-scale expansions, without corresponding flexibility measures, can make systems more exposed to operational risks (see \citeayn{orfeus}). 

The value function should now be defined as follows,
\begin{align*}
V(t,x) = \sup_{\nu \geq 0} \mathbb{E} \left[ \left. \int_t^T  \erm^{-r(s-t)} \left( h(P(x_s + N\bar{x}_s) - c)x_s - \nu_s(\alpha + \beta(\nu_s + N\bar{\nu}_s)) \right)  \drm s \,\right|\, x_t = x \right],
\end{align*}
for all $(t,x) \in [0,T] \times \R$. The form of the optimal control remains the same as \eqref{nustar}, and the corresponding HJB equation is similar to \eqref{HJB},
\begin{align*}
\frac{\partial V}{\partial t}-rV-\delta x\frac{\partial V}{\partial x}+hx(P(x+\bar{x}_tN)-c)+\frac{[(\frac{\partial V }{\partial x}-\alpha-\beta N\bar{\nu}_t)^+]^2}{4\beta}+\frac{1}{2}\sigma ^2x^2\frac{\partial ^2V}{\partial x^2}=0,
\end{align*}
with terminal condition $V(T,x) = 0$. More precisely, if we let $\sigma = 0$ in the above HJB equation, we recover \eqref{HJB}. Similarly, the Fokker-Planck becomes
\begin{align*}
\frac{\partial }{\partial t}m+\frac{\partial }{\partial x}\left[m\left(-\delta x+\frac{(\frac{\partial V}{\partial x}-\alpha-\beta N\bar{\nu}_t)^+}{2\beta}\right)\right]=\frac{1}{2}\frac{\partial ^2}{\partial x^2}\left(\sigma^2x^2m\right).
\end{align*}
Finally, the fixed point condition coincides with \eqref{nubarformula}. Intuitively, this model is expected to yield outcomes that closely resemble those of the deterministic setting studied in the previous sections. For conciseness, we limit ourselves in this section to presenting a numerical example that illustrates this resemblance. Specifically, when the price is linear and \(\sigma^2 < r + 2\delta\), the two numerical methods described earlier can be implemented.

The analogue of the first method discussed in \Cref{two_piece} is to approximate $V$ by a quadratic function in each of the two regions, namely 
\[
V(t,x) =
\begin{cases}
A_tx^2+B_tx+C_t,& \text{if } \frac{\partial V}{\partial x}(t, x) \geq \alpha + \beta N \bar{\nu}_t \\
a_tx^2+b_tx, & \text{if } \frac{\partial V}{\partial x}(t, x) < \alpha + \beta N \bar{\nu}_t,
\end{cases}
\]
with $b,B,C$ solutions to the same equations (\ref{a_eq}, \ref{Bequation}, \ref{Cequation}) as before, but $a,A$ now solve 
\begin{align*}
\dot{a}_t&=(r+2\delta-\sigma^2)a_t+hd_2, \quad 
\dot{A}_t=(r+2\delta-\sigma^2)A_t+hd_2-\frac{A_t^2}{\beta}.
\end{align*}
Then, we can apply the numerical method of \Cref{algorithm} and solve these two equations.

Alternatively, we can directly discretize the HJB equation using a central finite difference scheme, following the approach described in \Cref{central}. The resulting discretized HJB equation takes the form
\begin{align*}
&\ \frac{V^{n+1}_j - V^{n}_j}{\Delta t}
- r V^n_j
- \delta x_j \frac{V^n_{j+1} - V^n_{j-1}}{2\Delta x}
+  (P(N \bar{x}_n + x_j) - c) hx_j \\
+ &\ \frac{1}{4\beta} \left[\left( \frac{V^n_{j+1} - V^n_{j-1}}{2\Delta x} - \alpha - \beta N \bar{\nu}_n \right)^+\right]^2
+ \frac{\sigma^2 x_j^2}{2} \frac{V^n_{j+1} - 2V^n_j + V^n_{j-1}}{(\Delta x)^2}
= 0.
\end{align*}

For illustration, we apply the first method with the same parameters as in \Cref{two_piece}, with \(T=10\) and $\sigma =0.4$. The resulting capacity is shown in \Cref{mean_stochastic_}, and the threshold \(x^\star(t)\) is presented in \Cref{star_stochastic_}. We observe that the capacity trajectory with randomness (orange dotted curve) closely resembles the deterministic trajectory (blue curve). However, \(x^\star(t)\) is noticeably smaller compared to \Cref{fig:verification30}. This difference arises because \(a_t\) becomes much larger when \(\sigma>0\), since \((r + 2\delta - \sigma^2) < (r + 2\delta)\).

\begin{figure}[ht]
\centering
    \begin{subfigure}[b]{0.45\textwidth}
        \centering
        \includegraphics[width=\textwidth]{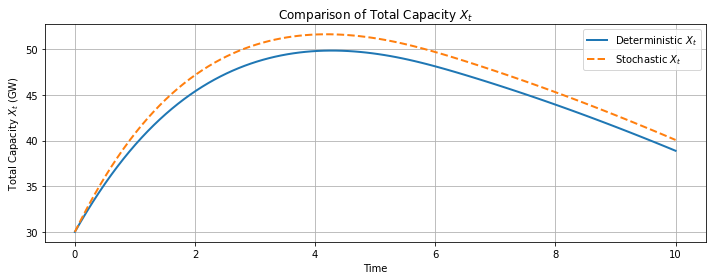}
        \caption{\footnotesize Total capacity ${X}_t$}
        \label{mean_stochastic_}
    \end{subfigure}
    \begin{subfigure}{0.45\textwidth}
        \centering
        \includegraphics[width=\textwidth]{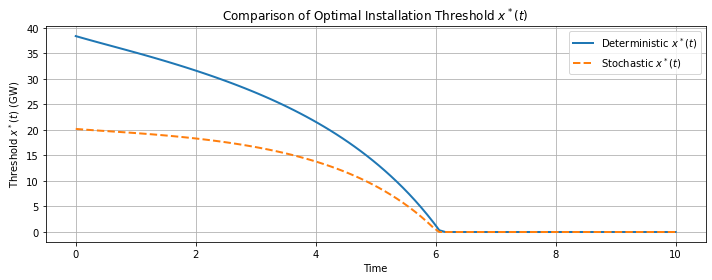}
        \caption{\footnotesize Threshold $x^\star(t)$}
        \label{star_stochastic_}
    \end{subfigure}
    \caption{Capacity dynamics and threshold for linear price function and $T=10$}
\end{figure}

This reduction reflects agents’ increased caution under uncertainty. In particular, the stochastic component in the dynamics introduces a precautionary effect, where firms may delay or reduce investments to hedge against downside risks. This behavior is consistent with real-world observations, where firms facing high uncertainty (\textit{e.g.}, due to variable output from renewables or policy instability) often exhibit more conservative expansion strategies.

\section{Conclusion}\label{sec:conclusion}

We conclude with a few modeling comments that suggest possible extensions or refinements of the current setup.

While we model the electricity price as a function of total installed capacity only, a more general formulation could allow the price to depend explicitly on time, \textit{i.e.} $ P = P(t, X_t)$, to capture external seasonal effects, regulatory interventions, or trends in demand that are not driven solely by supply. Incorporating such time dependence could be particularly relevant for settings where electricity prices exhibit strong periodic or structural trends.

Another possible extension is to introduce heterogeneity across firms in terms of cost structures, risk preferences, or technologies. For instance, the marginal installation cost \( \alpha \) could be modeled as a decreasing function of installed capacity \( x \), reflecting economies of scale. Similarly, the crowding sensitivity parameter \( \beta \) could also be specified as \( \beta(x) \), decreasing in \( x \), to capture the idea that larger firms may face lower relative frictions due to better access to contractors, permits, or supply chains. This would allow larger producers to install capacity more efficiently than smaller ones, capturing realistic differences in incentives between incumbents and new entrants. 

Taking into account storage investment decisions and flexible resources (\textit{e.g.} demand response, dispatchable backup) into the model would allow to assess their interaction with intermittent renewable deployment. Recent events—such as the major power outage in Spain and Portugal triggered by grid imbalances and a lack of short-term flexibility \cite{bajo-buenestado2025iberian}—highlight the critical role of these mechanisms in ensuring system stability during high-renewable penetration.

Finally, it could be relevant to study the impact of various policy tools—such as carbon taxes, renewable subsidies, or capacity remuneration mechanisms—on long-term investment decisions within the mean field framework. For instance, as in \cite{alasseur2023mean}, one could model subsidies that reduce the marginal cost of production \( c \) or the installation cost \( \alpha \), and examine how these interventions shift the equilibrium and accelerate renewable deployment.

\bibliography{bibliography_Emma} 
\end{document}